\newcounter{citedtheorems}
\newtheorem{defn}{Definition}[section]
\newtheorem{theorem}[defn]{Theorem}
\newtheorem*{theorem-m}{Theorem \ref{main-theorem}}
\newtheorem*{thm-p2a}{Theorem \ref{t:p2a}}
\newtheorem*{thm-seq}{Theorem \ref{t:seq}}
\newtheorem*{thm-e}{Theorem}
\newtheorem*{thm-m}{Main Theorem}
\newtheorem*{theorem-abs1}{Theorem \ref{ind-theorem}}
\newtheorem*{theorem-abs2}{Theorem \ref{a23}}
\newtheorem*{theorem-abs3}{Theorem \ref{ind-new}}
\newtheorem*{theorem-abs4}{Theorem \ref{m1}}
\newtheorem*{thm-x}{Theorem}
\newtheorem{thm-lit}[citedtheorems]{Theorem}
\newtheorem{defn-lit}[citedtheorems]{Definition}
\newtheorem{fact-lit}[citedtheorems]{Fact}
\newtheorem{cor}[defn]{Corollary}
\newtheorem{defn-claim}[defn]{Definition/Claim}
\newtheorem*{defn-in}{Definition \arabic{section}.\arabic{equation}}
\newtheorem*{claim-in}{Claim \arabic{section}.\arabic{equation}}
\newtheorem{concl}[defn]{Conclusion}
\newtheorem{conv}[defn]{Convention}
\newtheorem{claim}[defn]{Claim}
\newtheorem{lemma}[defn]{Lemma}
\newtheorem{obs}[defn]{Observation}
\newtheorem{rmk}[defn]{Remark}
\newtheorem{ntn}[defn]{Notation}
\newtheorem{disc}[defn]{Discussion}
\newtheorem{expl}[defn]{Example}
\newcommand{\lost}{\L os' }
\newcommand{\br}{\vspace{2mm}}
\newcommand{\mcp}{\cP}
\newcommand{\tfeq}{T_{\operatorname{feq}}}
\newcommand{\tlf}{\trianglelefteq}
\newcommand{\cf}{\operatorname{cof}}
\newcommand{\uu}{\mathcal{U}}
\newcommand{{\xw}}{\mathbf{w}}
\newcommand{\vv}{\mathcal{V}}
\newcommand{\cP}{\mathcal{P}}
\newcolumntype{L}{>{\centering\arraybackslash}m{4cm}}
\newcommand{\ii}{\iota}
\newcommand{\tp}{\operatorname{tp}}
\newcommand{\tcb}{}
\newcommand{\lgn}{\operatorname{lgn}}
\newcommand{\bz}{\mathbf{s}}
\newcommand{\mcl}{\mathcal{L}}
\newcommand{\de}{\mathcal{D}}
\newcommand{\ts}{\mathbf{S}}
\newcommand{\mch}{\mathcal{H}}
\newcommand{\rstr}{\upharpoonright}
\newcommand{\inc}{\operatorname{inc}}
\newcommand{\vp}{\varphi}
\newcommand{\lcf}{\operatorname{lcf}}
\newcommand{\trg}{T_{\mathbf{rg}}}
\newcommand{\leaf}{\operatorname{leaf}}
\newcommand{\leaves}{\operatorname{leaves}}
\title[New simple theories from hypergraph sequences]{New simple theories from hypergraph sequences}
\author{M. Malliaris and S. Shelah}
\thanks{\emph{Thanks:} Research partially supported by NSF 1553653, and by an 
NSF-BSF award (NSF 2051825, BSF 3013005232). 
Paper 1206 in Shelah's list.}
\address{Department of Mathematics, University of Chicago, 5734 S. University, Chicago, IL 60637, USA} 
\email{mem@math.uchicago.edu}
\address{Einstein Institute of Mathematics, Edmond J. Safra Campus, Givat Ram, The Hebrew
University of Jerusalem, Jerusalem, 91904, Israel, and Department of Mathematics,
Hill Center - Busch Campus, Rutgers, The State University of New Jersey, 110
Frelinghuysen Road, Piscataway, NJ 08854-8019 USA}
\email{shelah@math.huji.ac.il}
\urladdr{http://shelah.logic.at}
\begin{document}

\begin{abstract} 
We develop a family of simple rank one theories built over quite arbitrary sequences of finite hypergraphs. 
(This extends an idea from the recent proof that Keisler's order has continuum many classes, however, 
the construction does not require familiarity with the earlier proof.) 
We prove a model-completion and quantifier-elimination result for theories in this family. 
We develop a combinatorial property which they share. 
We invoke regular ultrafilters to show the strength of this property, showing that any flexible ultrafilter which is 
good for the random graph is able to saturate such theories. 
\end{abstract}

\maketitle 

\setcounter{tocdepth}{1}

\vspace{5mm}
\hfill{\emph{Dedicated to Boris Zilber on the occasion of his 75th birthday.}}

\vspace{5mm}

It is our pleasure to dedicate this to Boris for all the wonderful discoveries in model theory and its interaction with the rest of 
mainstream mathematics. 

Recently, we proved that Keisler's order has continuum many pairwise incomparable classes, within the simple rank one theories \cite{MiSh:1167}.  
A surprising point of that proof is that the theories built to obtain the continuum many incomparable classes can be 
very well understood, and are close to the random graph in various precise ways.  So we can analyze carefully how their types are 
realized and omitted; this understanding helps in proving incomparability.  
Briefly, those theories were built over template sequences of growing finite graphs, and aspects of the 
combinatorics of the template graphs such as edge densities played a role in the behavior of types in the associated theories.  
This was a very nice interaction of the finite and the infinite, where the role of graphs seemed central; 
we should ask whether this understanding 
applies to a larger, significant family of simple theories. 

In the present paper, we indeed find a way to extend ideas from construction of the theories in \cite{MiSh:1167} to build a 
nontrivial family of theories close to the random graph.   Informally, the previous idea of using  
templates of sequences of growing finite graphs can be extended to templates of 
sequences of growing finite hypergraphs of any arity.  We also indicate modifications of the construction 
involving equivalence relations rather than trees.   Although we have found these theories in the context of investigating 
Keisler's order, indications are that they may be of general interest.  Hence we have taken care to present them in the 
present paper in a hopefully easily accessible way. 

Meanwhile, an interesting aspect of Keisler's order on simple unstable theories is that it seems to be pointing the way towards isolating and 
analysing an interesting family of theories ``near'' the random graph, which includes the incomparable theories of \cite{MiSh:1167}, 
and now the more general family developed here.  We do not yet have indications whether this is \emph{the} family. 
We do intend to look at whether the incomparability via ultraproducts can be carried out at the generality of these theories, and to 
consider other related questions in a future manuscript. 

We thank the anonymous referee for thoughtful comments on the manuscript. 

\tableofcontents

\section{Templates and theories}  \label{t:theories}

To define our theories we will first need to define a template, which is a growing sequence of finite hypergraphs, all of the 
same fixed arity $k$, 
satisfying certain mild conditions the number of nodes and of edges.  Our main case is $k > 2$, but the construction also makes sense for
$k=2$ (graphs) and so generalizes a slight variant\footnote{The reader familiar with the earlier paper will remember that the 
theories there were built on bipartite graphs, which had certain advantages for the ultrapower analysis.  
In order to extend to hypergraphs, rather than solving the problem of extending the bipartition to 
a multi-partition, the problem was solved in a more satisfying way by eliminating the bi-partition; then the extension to 
higher arities is even more natural.} of the construction from \cite{MiSh:1167}.  The construction a priori makes sense without 
the conditions in \ref{d:template}, but the model completion and quantifier elimination arguments use them. 
Given any such template, we then build a theory in a natural way. 

\begin{defn}
Given a hypergraph $(H,E)$ where $E$ is a relation of arity $k$, say that $k$ is the \emph{arity} of the hypergraph. 
\end{defn}

\begin{defn} \label{d:k-full}
Call a hypergraph $(H, E)$ of arity $k$ a \emph{$k$-full hypergraph} if we can partition $E = E^* \cup E^{<k}$ such that 
$(H,E^*)$ is a $k$-uniform hypergraph, meaning the edge relation is symmetric and irreflexive and holds only on 
tuples of $k$ distinct elements, and $E^{<k}$ holds on \emph{all} tuples with $<k$ distinct elements.  
\end{defn}

Informally, $k$-full hypergraphs are those obtained by 
starting with a $k$-uniform hypergraph, 
where the edge is symmetric and irreflexive and holds only on tuples of $k$ distinct elements, 
and then extending it by setting the edge relation to hold on \emph{all} tuples with repetition. 
(This is a technical help since non-edges in template hypergraphs will indicate inconsistency in the related theory.) 
Note that it still is well defined to call $k$ the arity of the hypergraph. 

\begin{defn}
Given a hypergraph $(H, E)$ of arity $k$, 
a \emph{$k$-full-clique} is a set\footnote{In the interesting case, a set with $\geq k$ members, but 
this hypothesis is not strictly needed as the sequences can contain repetitions. In the case of the independent set, 
we need $|A| \geq k$ and could have asked $|A| > k$.} $A \subseteq H$ where every sequence of $k$ elements of 
$A$ belongs to $E$, and a 
\emph{$k$-independent set} is a set $A \subseteq H$ with $\geq k$ members 
such that no sequence of $k$ distinct elements of $A$ belongs to $E$. 
\end{defn}

\begin{defn} \label{d:template}
A \emph{template} of arity $k$, $2 \leq k < \omega$, consists of a
sequence $\mch = \bar{\mathbf{h}} = \langle \mathbf{h}_n : n < \omega \rangle$ 
and a function $f_\mch: \omega \rightarrow \omega \setminus \{ 0 \}$
such that:
\begin{enumerate}
\item[(0)] 
$\lim_{n \rightarrow \infty} f_\mch(n) = \infty$, meaning that for every $N <\omega $ there is $n < \omega$ such that 
$m \geq n \implies f_\mch(m) \geq N$. 
\item  for all $n<\omega$, 
$\mathbf{h}_n = (H_n, E_n)$ is a finite $k$-full hypergraph, $H_n = ||\mathbf{h}_n|| $ is a finite cardinal and so 
we identify the set of vertices $H_n$ with the set $\{ 0, \dots, H_n-1 \}$.   
\end{enumerate}
Moreover, for all $n<\omega$:

\begin{enumerate}[resume]
\item $f_\mch(n) \leq H_n  < \aleph_0$.
\item \emph{(Extension)}  Let $t = f_\mch(n)$. 
For every $i^0_0, \dots, i^{0}_{k-2}, 
\dots, i^{t-1}_{0},\dots, i^{t-1}_{k-2}$ from $H_n$, there exists $s \in H_n$ such that 
$\langle s, i^\ell_0,\dots, i^\ell_{k-2} \rangle \in E_n$ for all $\ell<t$. 
\end{enumerate}
We say $\mch$ is a template if $(\mch, f)$ is for some $f$. 
\end{defn}

\begin{rmk}
For notational simplicity in Definition $\ref{d:template}$, we fix $k$. We could also have defined a parameter 
$\mathbf{k}_n$ for each $n$ measuring the fullness. 
\end{rmk}

\begin{defn}
A \emph{template} is a template of arity $k$ for some $k<\omega$. 
\end{defn}

For example, the sequence of hypergraphs given by $H_n = n+1$ and $E_n = {^k H_n}$ is a template of arity $k$. 
As a more interesting example, choose the $\mathbf{h}_n$s to be a sequence of finite random hypergraphs, 
with size and edge probability sufficient to give the extension condition \ref{d:template}(3).  For a similar sufficient calculation in the 
original case of graphs, see \cite{MiSh:1167} \S 6. 

As the next definition suggests, it will be useful to think of trees naturally associated to paths through the template hypergraphs. 

\begin{defn} Given a template $\mch$, and recalling $H_n$ from $\ref{d:template}$, 
define
\[ X_\mch := \{ \rho: 
\rho \in {^{\omega >} \omega}, 0 \leq \rho(n) < H_n \mbox{ for all } n<\lgn(\rho) \} ~\} \]
to be, informally, the set of finite sequences of choices of vertices from initial segments of our hypergraph sequence, 
naturally forming a tree. 
Define 
\[ \leaves(X_\mch) = \{ \rho \in {^\omega \omega} ~:~ \eta_i \rstr n \in X_\mch \mbox{ for all } n<\omega \} \]
to be the ``limit points'' of this set.
\end{defn}

\begin{defn} \label{d:t-zero}
We define a theory $T_0=T_0(\mch)$ based on the template $\mch$ to be the following 
universal theory 
in the following language.
\begin{enumerate}
\item $\mcl = \mcl_{\mch}$ contains equality, a $k$-place relation $R$, and countably many unary predicates 
\[ \{ Q_\eta : \eta \in X_\mch \}. \]
\item $T_0$ contains universal axioms stating: 
$R$ is a symmetric $k$-uniform hypergraph, i.e. $R$ holds only on distinct $k$-tuples and if it holds on 
some $k$-tuple it holds on all its permutations.\footnote{Note that $R$, the hypergraph relation in the theory, is 
symmetric irreflexive, while the $E_n$s, the hypergraph relations in the templates, need not be irreflexive 
by the definition of ``$k$-full.''}  
\item If $\eta \tlf \nu \in X_\mch$ then $T_0$ contains the axiom: 
\[ (\forall x) (Q_{\langle \rangle} (x)) ~\land ~ (\forall x) (Q_\nu(x) \implies Q_\eta(x)) \]
saying that $Q_{\langle \rangle}$ names everything, and $Q_\nu$ refines $Q_\eta$.
\item If $\eta \in X_\mch$, $\lgn(\eta) = m$ and $i \neq j < ||\mathbf{h}_m||$ then $T_0$ contains the axiom:
\[ (\forall x) \left(\neg (Q_{\eta^\smallfrown \langle i \rangle } (x) \land  Q_{\eta^\smallfrown \langle j \rangle} (x))\right) \]
moreover, $T_0$ contains the axiom $(\forall x)( Q_\eta(x) \implies \bigvee_i Q_{\eta^\smallfrown \langle i \rangle}(x))$, 
so the predicates $\langle Q_{\eta^\smallfrown \langle i \rangle} : i <  ||\mathbf{h}_m|| \rangle$ partition $Q_\eta$. 
\item For every $\eta_0, \dots, \eta_{k-1}$ from $X_\mch$ and $n < \min \{ \lgn(\eta_0), \dots, \lgn(\eta_{k-1}) \}$, if 
$\langle \eta_0(n), \dots, \eta_{k-1}(n) \rangle \notin E_n$ then $T_0$ contains the axiom 
\[ \forall x_0, \dots, x_{k-1} \left( P_{\eta_0}(x_0) \land \cdots \land P_{\eta_{k-1}}(x_{k-1}) \implies \neg R(x_0, \dots, x_{k-1})   \right) \]
forbidding any edges across these predicates. 
\end{enumerate}
\end{defn}

\begin{disc} \label{d:05} 
\emph{Informally, the unary predicates give a model $M \models T_0$ the (hard-coded) structure of a tree. We have $\forall{x} Q_{\langle \rangle}(x)$. 
The model is first partitioned into predicates $Q_{\langle i \rangle}$ for $i < ||\mathbf{h}_0||$.  By induction on $m \geq 1$,   
each predicate $Q_\eta$ [where $\lgn(\eta) = m$, \emph{i.e.} $\eta$ is a function with domain $\{ 0, \dots, m-1 \}$] 
is partitioned into $||\mathbf{h}_m||$ disjoint pieces,
the $Q_{\eta^\smallfrown \langle i \rangle}$'s. 
So any $a \in M$ will be in some concentric sequence of predicates
$\langle Q_{\rho \rstr n} : \rho \in \leaves(X_\mch), n < \omega \rangle$.   Call $\rho$ the \emph{leaf} of $a$, see \ref{d:leaves}. 
 Note that we have arranged our indexing so that, 
in this notation, if $\rho(n) = i$ we have 
\[ a \in Q_{ (\rho \rstr n) ^\smallfrown \langle i \rangle } \] 
in other words, that its predicate at level $n$ corresponds to the $i$-th element of $H_n$. 
The final condition on edges amounts to the following. 
Given $a_0, \dots, a_{k-1}$ in a model $M \models T_0$, each element $a_i$ will belong to some leaf $\rho_i$, 
and an edge $R$ cannot occur on $\langle a_0, \dots, a_{k-1} \rangle$ \emph{unless} for every $n < \omega$, 
$\langle \rho_i(n) : i < k \rangle$ is an edge in $E_n$. (Since $T_0$ is a universal theory, of course, it records 
here just what is 
forbidden, and remains agnostic about whether edges do occur if permitted; a model completion, such as we 
shall construct soon, would have 
more information.)  }

\emph{Notice the `sparsification' of edges, or rather the accumulation of rules forbidding edges, 
as we go deeper into the ``tree''. If $\eta_0, \dots, \eta_{k-1}$ are elements of 
$X_\mch$ of length $m+1$, and $\langle \eta_0(m), \dots, \eta_{k-1}(m) \rangle \notin E_m$, then in $M$ we know 
there can be no $R$-edges spanning elements chosen from the predicates $Q_{\eta_0}, \dots, Q_{\eta_{k-1}}$ regardless of 
how these elements sit in subsequent predicates.  If on the other hand 
$\langle \eta_0(\ell), \dots, \eta_{k-1}(\ell) \rangle \in E_\ell$ for $\ell \leq m$, then a priori there may be edges 
spanning \emph{some} elements from the predicates $Q_{\eta_0}, \dots, Q_{\eta_{k-1}}$, but it may depend a priori  
on how those elements sit in subsequent predicates and what the templates say there.  } 
\end{disc}

The following auxiliary objects may clarify the picture. 

\begin{defn}   \label{d:leaves}  Fix a template $\mch$ of arity $k$. 
Let $T_0 = T_0(\mch)$ and let $M \models T_0$. 
\begin{enumerate}
\item For $a \in M$, define $\leaf(a)$ to be the unique $\rho \in \leaves(X_\mch)$ such that 
\[  M \models a \in Q_{\rho \rstr n}  \mbox{ for all $n < \omega$ }. \] 
\item Let $\mathbf{h}_\infty$ be the $k$-uniform hypergraph with vertex set $H_\infty := \leaves(X_\mch)$ and 
with edge relation $E_\infty$ given by:
\[ \langle \rho_0, \dots, \rho_{k-1} \rangle \in E_\infty \iff  \langle \rho_0(n), \dots, \rho_{k-1}(n) \rangle 
\in E_n \mbox{ for all $n<\omega$.} \]
Of course $\mathbf{h}_\infty = \mathbf{h}_\infty(\mch)$. 
\end{enumerate}
\end{defn}

\begin{obs}
It follows from $\ref{d:t-zero}(5)$ that if $M \models T_0$, $a_0, \dots, a_{k-1} \in M$, we can have 
$M \models R(a_0, \dots, a_{k-1})$ only if $\langle \leaf(a_0), \dots, \leaf(a_{k-1}) \rangle \in E_\infty$. 
\end{obs}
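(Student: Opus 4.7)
The plan is to argue by contraposition, using the axiom scheme $\ref{d:t-zero}(5)$ applied at some single level $n$ where the edge fails in the template.

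First, I would spell out the goal. By Definition $\ref{d:leaves}(2)$, the conclusion $\langle \leaf(a_0),\dots,\leaf(a_{k-1})\rangle \in E_\infty$ is equivalent to the statement $\langle \leaf(a_0)(n),\dots,\leaf(a_{k-1})(n)\rangle \in E_n$ for every $n<\omega$. So the contrapositive to prove is: if there exists some $n<\omega$ such that $\langle \leaf(a_0)(n),\dots,\leaf(a_{k-1})(n)\rangle \notin E_n$, then $M \not\models R(a_0,\dots,a_{k-1})$.

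Next, given such an $n$, I would produce the relevant initial segments $\eta_i := \leaf(a_i)\rstr(n+1)$ for $i<k$. Each $\eta_i$ lies in $X_\mch$, has length $n+1$, and satisfies $\eta_i(n) = \leaf(a_i)(n)$. By the defining property of $\leaf(a_i)$ in $\ref{d:leaves}(1)$, we have $M \models Q_{\eta_i}(a_i)$ for each $i<k$. Moreover $n < \min\{\lgn(\eta_0),\dots,\lgn(\eta_{k-1})\} = n+1$, and by choice of $n$, $\langle \eta_0(n),\dots,\eta_{k-1}(n)\rangle \notin E_n$. Thus the tuple $(\eta_0,\dots,\eta_{k-1})$ and the index $n$ satisfy the hypothesis of clause (5) of Definition $\ref{d:t-zero}$, so $T_0$ contains the axiom
\[ \forall x_0,\dots, x_{k-1}\bigl( Q_{\eta_0}(x_0) \land \cdots \land Q_{\eta_{k-1}}(x_{k-1}) \implies \neg R(x_0, \dots, x_{k-1}) \bigr). \]
Instantiating with $x_i := a_i$ and using $M \models T_0$ yields $M \models \neg R(a_0,\dots,a_{k-1})$, completing the contrapositive.

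There is no real obstacle here; the whole content is matching the universal axiom from $\ref{d:t-zero}(5)$ to the specific witnesses provided by the leaves. The only minor point worth being careful about is picking $\eta_i$ of length $n+1$ (rather than $n$) so that the index $n$ strictly lies below $\lgn(\eta_i)$, as required by the hypothesis of the axiom scheme, and noting that the predicates $P_{\eta_i}$ written in $\ref{d:t-zero}(5)$ are the $Q_{\eta_i}$ predicates of the language.
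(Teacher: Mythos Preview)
Your argument is correct and matches the paper's intent: the paper gives no proof beyond asserting that the observation follows from \ref{d:t-zero}(5), and your contrapositive spells out exactly why. Your care in taking $\eta_i$ of length $n+1$ and noting the $P$/$Q$ typo is appropriate.
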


\begin{expl}
Suppose $k=3$,  $\langle 0, 1, 2 \rangle \in E_0$ and $\langle 3,4,5 \rangle \in E_1$. 
Then $R$-edges are not a priori forbidden in $T_0$ between $Q_{\langle 0 3\rangle}, Q_{\langle 1 4\rangle}, 
Q_{\langle 2 5\rangle}$, nor between $Q_{\langle 0 4\rangle}, Q_{\langle 1 5\rangle}, 
Q_{\langle 2 3\rangle}$ remembering symmetry of $E_1$, nor between $Q_{\langle 0 1\rangle}, Q_{\langle 1 0\rangle}, 
Q_{\langle 2 0\rangle}$ remembering $E_1$ is $k$-full.
\end{expl}

\vspace{5mm}

\section{Model completion and quantifier elimination}

\begin{conv} For the entirety of this section, fix a template $\mch$, $f_\mch$ of arity $k \geq 2$, \underline{thus} $\textbf{h}_\infty$ and  
$T_0$ as in $\ref{d:leaves}$ and $\ref{d:t-zero}$, respectively. 
\end{conv}

\begin{claim} 
For any $\rho \in H_\infty$ there are continuum many tuples 
$\rho_0, \dots, \rho_{k-2} \in H_\infty$ such that $\langle \rho, \rho_0, \dots, \rho_{k-2} \rangle \in E_\infty$, 
i.e. each leaf in this graph is contained in continuum many edges.  
\end{claim}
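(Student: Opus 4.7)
I aim to construct $2^{\aleph_0}$-many distinct $(k-1)$-tuples of leaves extending $\rho$ via a binary tree indexed by ${^\omega 2}$: at each sufficiently large level $n$ I branch on a choice between two distinct $(k-1)$-tuples in ${H_n}^{k-1}$ extending $v := \rho(n)$ to an $E_n$-edge.

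The branching at level $n$ is nearly free when $k \geq 3$: by the $k$-full structure (Definition \ref{d:k-full}) any $k$-tuple with at most $k - 1$ distinct entries lies in $E^{<k}_n \subseteq E_n$, so for each $\ell \in H_n$ the $(k-1)$-tuple $\langle v, v, \dots, v, \ell\rangle$ (with $k-2$ copies of $v$) extends $v$, and varying $\ell$ gives $|H_n|$ distinct extending tuples. For $k = 2$, $k$-fullness gives nothing beyond the trivial $\langle v\rangle$, so one invokes the extension property of Definition \ref{d:template} applied to the $t := f_\mch(n)$ one-tuples $\langle v\rangle, \langle w_1\rangle, \dots, \langle w_{t-1}\rangle$, where $w_1, \dots, w_{t-1}$ are pairwise distinct elements of $H_n \setminus \{v\}$ (possible since $H_n \geq f_\mch(n)$); this produces $s \in H_n$ with $\langle s, v\rangle \in E_n$ and $\langle s, w_i\rangle \in E_n$ for all $i < t$. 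A brief case split on whether $s = v$ always delivers a non-reflexive $E^*_n$-neighbor $u$ of $v$ (if $s = v$, each $w_i$ is such a neighbor since $v \neq w_i$ rules out the $E^{<2}_n$ reflexive case; otherwise $s$ itself is), and then $\langle v\rangle$ and $\langle u\rangle$ are the two distinct extending $1$-tuples we want.

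With branching established, pick $n^*$ such that $f_\mch(n) \geq 2$ for all $n \geq n^*$ (possible since $\lim_n f_\mch(n) = \infty$); at each such level fix two distinct extending $(k-1)$-tuples $\eta^0_n, \eta^1_n$ from the previous step, and for $n < n^*$ fix any single extending tuple $\eta^*_n$. For each $\sigma \in {^\omega 2}$ and each $j < k-1$, define $\rho^\sigma_j : \omega \to \omega$ by $\rho^\sigma_j(n) := (\eta^{\sigma(n-n^*)}_n)_j$ when $n \geq n^*$ and $\rho^\sigma_j(n) := (\eta^*_n)_j$ otherwise. Each $\rho^\sigma_j$ lies in $\leaves(X_\mch) = H_\infty$ since all its values are in the corresponding $H_n$, and by construction $\langle \rho(n), \rho^\sigma_0(n), \dots, \rho^\sigma_{k-2}(n)\rangle \in E_n$ at every level, so $\langle \rho, \rho^\sigma_0, \dots, \rho^\sigma_{k-2}\rangle \in E_\infty$. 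If $\sigma \neq \sigma'$ in ${^\omega 2}$, pick $m$ with $\sigma(m) \neq \sigma'(m)$; at level $n := n^* + m$ the distinct tuples $\eta^0_n$ and $\eta^1_n$ disagree in some coordinate $j$, so $\rho^\sigma_j(n) \neq \rho^{\sigma'}_j(n)$ and $\vec{\rho}^\sigma \neq \vec{\rho}^{\sigma'}$. Hence $\sigma \mapsto \vec{\rho}^\sigma$ is injective and produces continuum many distinct extending tuples. The only delicate step is the single-level branching for $k = 2$, where $k$-fullness provides no free structure and the case analysis on the output of the extension property is genuinely needed; for $k \geq 3$ the construction reduces to combinatorial bookkeeping on top of $k$-fullness.
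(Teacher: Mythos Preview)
Your proof is correct and is the natural unpacking of the paper's one-line proof (``By Extension, \ref{d:template}(3)''): exhibit at each sufficiently large level two distinct $(k-1)$-tuples completing $\rho(n)$ to an $E_n$-edge, then branch along ${}^{\omega}2$. Your observation that for $k \geq 3$ the branching already follows from $k$-fullness alone, with Extension genuinely needed only for $k=2$, is a small refinement the paper does not make explicit.
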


\begin{proof} 
By Extension, \ref{d:template}(3).
\end{proof}

The next claim is a key use of \ref{d:template}(3): in some sense, it shows that consistency in the template at 
large enough finite levels can be extended to full consistency.

\begin{claim}[Completion to a type]  \label{c:completion}
For any $1 \leq t<\omega$ and any choice of $t$ $k$-tuples $\rho^0_0, \dots, \rho^0_{k-2}, \dots, \rho^{t-1}_0, \dots, \rho^{t-1}_{k-2}$ 
from $H_\infty$, 
\underline{\emph{if}} there exists $\nu \in X_\mch$ such that $\lgn(\nu) > \min \{ m : f(n) \geq t $ for all $n \geq m \}$ and  
\[ \langle \nu(\ell), \rho^i_0(\ell), \dots, \rho^i_{k-2}(\ell) \rangle \in E_\ell  \mbox{ for all $i<m$ and $\ell < {\lgn(\nu)}$} \]
\emph{then} we can choose $\nu_*$ such that $\nu \tlf \nu_* \in H_\infty$ and 
\[ \langle \nu_*(\ell), \rho^i_0(\ell), \dots, \rho^i_{k-2}(\ell) \rangle \in E_\infty \mbox{ for all $i<m$ and $\ell < \omega$}. \]
\end{claim}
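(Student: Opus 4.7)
The plan is to construct $\nu_*$ coordinate by coordinate above $\lgn(\nu)$, invoking Extension \ref{d:template}(3) once at each level. First, set $\nu_* \rstr \lgn(\nu) := \nu$; this will force $\nu \tlf \nu_*$ once $\nu_*$ is completed, and by the hypothesis on $\nu$ the required edge condition $\langle \nu_*(\ell), \rho^i_0(\ell), \dots, \rho^i_{k-2}(\ell) \rangle \in E_\ell$ already holds at every level $\ell < \lgn(\nu)$. It remains to extend $\nu_*$ to a leaf preserving this condition at all higher levels.

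Fix $\ell \geq \lgn(\nu)$ and assume $\nu_* \rstr \ell$ has been defined. Let $m_0 := \min\{m : f_\mch(n) \geq t \text{ for all } n \geq m\}$; by hypothesis $\lgn(\nu) > m_0$, so $\ell > m_0$ and hence $f_\mch(\ell) \geq t$. Set $t' := f_\mch(\ell)$, list the $t$ many $(k-1)$-tuples $\langle \rho^i_0(\ell), \dots, \rho^i_{k-2}(\ell) \rangle$ for $i<t$, and pad them with $t' - t$ copies of the $i=0$ tuple to obtain $t'$ many $(k-1)$-tuples of elements of $H_\ell$. Applying \ref{d:template}(3) to this list yields some $s \in H_\ell$ satisfying $\langle s, \rho^i_0(\ell), \dots, \rho^i_{k-2}(\ell) \rangle \in E_\ell$ for every $i<t$; set $\nu_*(\ell) := s$ and continue.

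Carrying the induction through all $\ell < \omega$ produces $\nu_* \in {^\omega\omega}$ with $\nu_*(\ell) \in H_\ell$ for every $\ell$. Hence $\nu_* \rstr n \in X_\mch$ for every $n$, so $\nu_* \in \leaves(X_\mch) = H_\infty$ by \ref{d:leaves}, and since the edge condition has been arranged at every level we conclude $\langle \nu_*, \rho^i_0, \dots, \rho^i_{k-2} \rangle \in E_\infty$ for each $i<t$ by the definition of $E_\infty$. There is no real obstacle: the hypothesis $\lgn(\nu) > m_0$ was imposed precisely so that Extension applies at every level from $\lgn(\nu)$ onward, and the mild mismatch when $f_\mch(\ell) > t$ is absorbed by padding with repetitions, which is harmless because the edge constraint depends only on the $(k-1)$-tuple of vertices, not on its position in the list of $t'$ tuples.
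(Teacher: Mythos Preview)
Your proof is correct and follows essentially the same approach as the paper: build $\nu_*$ level by level above $\lgn(\nu)$, invoking Extension \ref{d:template}(3) once at each new coordinate. You are in fact slightly more careful than the paper in explicitly padding the list of $(k-1)$-tuples from $t$ up to $f_\mch(\ell)$ before applying Extension, a point the paper leaves implicit.
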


\begin{proof}
Let $n := \lgn(\nu)$. 
By induction on $r < \omega$ let us prove that we can find $\nu_r \in X_\mch$ of length $n+r$ 
such that $\nu \tlf \nu_r$ and 
\[ \langle \nu_r(\ell), \rho^i_0(\ell), \dots, \rho^i_{k-2}(\ell) \rangle \in E_\ell \mbox{ for all $i<m$ and $\ell < n+r$}. \]
For $\ell = 0$ take $\nu_t = \nu$. 
For $\ell > 0$, apply Extension, \ref{d:template}(5), to the tuples 
\[ \rho^0_0(n+r-1), \dots, \rho^0_{k-2}(n+r-1), \dots, \rho^{t-1}_0(n+r-1), \dots, \rho^{t-1}_{k-2}(n+r-1) \] 
in the hypergraph $\textbf{h}_{n+r-1}$ and let $b$ be the appropriate element of $H_{n+r-1}$ returned by that axiom. 
Then $\nu_{r} := {\nu_{r-1}}^\smallfrown \langle a \rangle$ fits the bill. 
\end{proof}

\br

\begin{defn} \label{d:t-zero-m}
For any $m < \omega$, define
$T^m_0$ to be the restriction of $T_0$ to the language with 
equality, a $k$-place relation $R$, and unary predicates 
\[ \{ Q_\eta : \eta \in X_\mch, \lgn(\eta) \leq m \}. \]
\end{defn}

\begin{claim}
For each $m<\omega$, the model completion $T^m$ of $T^m_0$ exists. 
\end{claim}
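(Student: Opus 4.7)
The plan is to construct $T^m$ as the theory of a Fra\"iss\'e limit. Let $\mcl^m$ denote the restriction of $\mcl_{\mch}$ to the predicates $Q_\eta$ with $\lgn(\eta) \leq m$; this language is finite relational, since the index set $\{ \eta \in X_\mch : \lgn(\eta) \leq m \}$ has cardinality at most $\sum_{n \leq m} \prod_{\ell < n} H_\ell < \aleph_0$. Let $\mcr$ denote the class of finite $\mcl^m$-structures modelling $T^m_0$; since $T^m_0$ is universal, this is exactly the class of finite substructures of models of $T^m_0$. Then $\mcr$ is closed under substructures and has only countably many isomorphism types.

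The substantive step is to check that $\mcr$ has the joint embedding and amalgamation properties. For JEP, given $A, B \in \mcr$, take the disjoint union $A \sqcup B$ with unary predicates inherited and $R^{A \sqcup B} := R^A \cup R^B$. For AP, given $A \subseteq B, C$ in $\mcr$, take $D := B \cup_A C$, identifying elements of $A$, inheriting unary predicates, and setting $R^D := R^B \cup R^C$, so that no new $R$-edges cross between $B \setminus A$ and $C \setminus A$. In both cases the only non-automatic axiom to verify is the forbidden-edge axiom \ref{d:t-zero}(5); but that axiom only \emph{forbids} edges, so it continues to hold when no new edges are introduced.

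By Fra\"iss\'e's theorem, $\mcr$ has a unique (up to isomorphism) countable ultrahomogeneous limit $M_*$, whose age is $\mcr$. Set $T^m := \operatorname{Th}(M_*)$. Ultrahomogeneity in a finite relational language yields quantifier elimination for $T^m$. To conclude that $T^m$ is the model completion of $T^m_0$: (a) $T^m \supseteq T^m_0$ because $M_*$ has age $\mcr$ and so models the universal theory $T^m_0$; (b) every $N \models T^m_0$ embeds into a model of $T^m$, since $T^m \cup \operatorname{Diag}(N)$ is finitely satisfiable---each finite subdiagram concerns a finite substructure of $N$, which lies in $\mcr$ and hence embeds into $M_*$; (c) QE for $T^m$ implies that $T^m \cup \operatorname{Diag}(N)$ is complete for every such $N$, which together with (b) gives the model completion property.

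The main point to be careful about, or rather the pleasant feature, is the amalgamation step. In other theories, amalgamating over a common part may require delicate choices about how relations extend across $B \setminus A$ and $C \setminus A$. Here it is almost trivial because $T^m_0$ never \emph{requires} an $R$-edge---it only forbids edges dictated by the template at levels $< m$---so taking no new edges always yields a model. This contrasts with the infinite completion-to-a-type argument in Claim \ref{c:completion}, which genuinely invoked the Extension axiom \ref{d:template}(3).
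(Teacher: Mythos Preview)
Your proof is correct and takes essentially the same approach as the paper: both verify JEP and AP for models of $T^m_0$ via the free amalgam (take the union, inherit predicates, set $R$ to be the union of the edge relations with no new cross-edges), noting this works precisely because the axioms of $T^m_0$ only \emph{forbid} edges. The paper simply cites the standard existence criterion for model completions of universal theories in finite relational languages, whereas you unpack it explicitly through the Fra\"iss\'e construction and the resulting quantifier elimination; the substantive combinatorial content is identical.
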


\begin{proof}  Just as in the case of graphs (\cite{MiSh:1167} 2.16), each $T^m_0$ is a universal theory in a finite 
relational language. The class of its models has the joint embedding property JEP for any two 
$M_1, M_2$ with $|M_1| \cap |M_2| = \emptyset$, and the amalgamation property 
AP when we have models $M_1, M_2$ and $M_0$ with $M_0 \models T^m_0$ and 
$M_0 \subseteq M_\ell$ for $\ell = 1,2$ and $|M_1| \cap |M_2| = |M_0|$.  To see this in both cases, 
the model $N$ whose domain is $|M_1| \cup |M_2|$, such that for each unary predicate $Q$, 
$Q^N = Q^{M_1} \cup Q^{M_2}$ and for the edge relation $R$, $R^N = R^{M_1} \cup R^{M_2}$ will be a model of 
$T^m_0$. Thus $T^m$ exists. 
\end{proof}

\begin{rmk}  Regarding the model completion: 
if $M \models T^m$, then $M$ is infinite, and indeed for each unary predicate $Q \in \tau(T^m)$, $Q^M$ is infinite. 
Moreover,\footnote{We can extend case (a) to $\{ \eta_0, \dots, \eta_{\ell-1} \}$ for some larger finite $\ell$ which form a $k$-full-clique 
in the same strong hereditary sense.}
 for any $\eta_0, \dots, \eta_{k-1} \in X_\mch$ with $\lgn(\eta_\ell) =m$ for $\ell < k$:
\begin{itemize}
\item[(a)] if $(\eta_0(i), \dots, \eta_{k-1}(i)) \in E_i$ for all $i<m$, then $R^M$ on 
$Q^M_{\eta_0} \times \cdots \times Q^M_{\eta_{k-1}}$ ``is a random hypergraph'' in the sense of first-order logic, 
meaning that \emph{if} 
$A \subseteq Q^M_{\eta_1} \times \cdots \times Q^M_{\eta_{k-1}}$ and 
$B \subseteq |M|^k$ and\footnote{We could have asked that 
$B \subseteq Q^M_{\eta_1} \times \cdots \times Q^M_{\eta_{k-1}}$,  
but the stronger statement is true.}
``$A \cap B = \emptyset$'' in the strong sense that no permutation of any $(a_1, \dots, a_{k-1}) \in A$ belongs to $B$, 
\emph{then} the set of formulas 
\begin{align*} p(x) = & \{ R(x,a_1, \dots, a_{k-1}) : (a_1, \dots, a_{k-1}) \in A \} \\
& \cup \{ \neg R(a,b_1,\dots, b_{k-1}) : (b_1,\dots, b_{k-1}) \in B \} 
\end{align*}
is a partial type in $M$, so in particular is realized if $A$, $B$ are both finite.\footnote{Note that 
by our assumption of the template hypergraphs being ``$k$-full,''  we are in case (a) whenever $| \{ \eta_0, \dots \eta_{k-1} \} | < k$. 
The hypergraph edge $R$ is a $k$-uniform hypergraph in $M$, of course, so any $(a_0, \dots, a_{k-1}) \in R^M$ 
will be a tuple of distinct elements, but fullness of the template hypergraphs means some of the elements in such a tuple are a priori 
allowed to come from the same predicate at any given level.  In particular, for each $\eta \in X_\mch$ with $\lgn(\eta) = m$, 
$(Q^M_\eta, R^M \rstr Q^M_\eta)$ is a random $k$-ary hypergraph in the usual sense of first-order logic.}

\item[(b)]  if not, $(\eta_0, \dots, \eta_{k-1})$,  then $(Q^M_{\eta_0} \times \cdots \times Q^M_{\eta_{k-1}})~ \cap ~R^M = \emptyset$. 
\end{itemize}
\end{rmk}

Next we upgrade \cite{MiSh:1167}, Claim 2.17 to the context of hypergraphs. 

\begin{ntn}
For an ordered set $X$, 
let $\inc_\ell(X)$ be the set of strictly increasing $\ell$-tuples of elements of $X$. 
\end{ntn}

\begin{defn} Given $T^m$ for some $m<\omega$ and $M, N \models T^m$, recall that: 
\begin{enumerate}
\item $a_1, \dots, a_n \in |M|$ and $b_1, \dots, b_n \in  |N|$ have the same quantifier-free 
$\tau(T^m)$-type when they agree on equality, instances of $R$, and predicates $Q_\eta$ up to $\lgn(\eta) = m$. 

\item $\vp(x,y_1,\dots, y_n)$ is a \emph{complete} quantifier-free formula of $\tau(T^m)$ 
when: 
\begin{enumerate}
\item  for every unary predicate $Q \in \tau(T^m)$ and variable $z \in \{ x, y_1, \dots, y_n \}$, either $\vp \vdash 
Q(z)$ or $\vp \vdash \neg Q(z)$, 
\item  for every $z_0, z_1$ from $\{ x, y_1, \dots, y_n \}$ either 
$\vp \vdash z_0 = z_1$ or $\vp \vdash z_0 \neq z_1$,   
\item for every $z_0, \dots, z_{k-1}$ from $\{ x, y_1, \dots, y_n \}$, 
either $\vp \vdash R(z_0, \dots, z_{k-1})$ or $\vp \vdash \neg R(z_0, \dots, z_{k-1})$. 
\end{enumerate}
Recall that the language is finite so this is well defined. 
\end{enumerate}
\end{defn}

\noindent Our next lemma says that for each $m$, the truth of sentences of $\tau(T^m_0)$ of length $\leq m$ soon stabilizes 
in the sequence of theories $T^{k}$ as $k$ goes to infinity. 

\begin{lemma}  \label{main-qe-lemma}
For every $m<\omega$, the following holds. 
Let 
\[ m_* \geq \min \{ n : n^\prime \geq n \implies f_\mch(n^\prime) \geq m \}. \] 
 If $M \models T^{m_*}$, $N \models T^{m_*+1}$ and $\vp$ is a sentence of $\tau(T^{m})$ 
of length $\leq m$, 
then $M \models \vp ~ \iff ~ N \models \vp$. 
\end{lemma}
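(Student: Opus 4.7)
The plan is to imitate (and upgrade from graphs to $k$-uniform hypergraphs) Claim 2.17 of \cite{MiSh:1167}: set up an Ehrenfeucht--Fra\"{\i}ss\'e back-and-forth of depth $m$ between the $\tau(T^m)$-reducts of $M$ and $N$. Concretely, I would define $\bar a \equiv_r \bar b$ to mean that the tuples agree on atomic $\tau(T^m)$-formulas and that every spoiler move in one structure can be answered in the other so as to reach $\equiv_{r-1}$; the lemma then follows from the standard EF theorem, provided I can prove that the empty tuples are $\equiv_m$-related.

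The heart of the argument is the single-step extension: given matched $\bar a=(a_1,\dots,a_n)$ and $\bar b=(b_1,\dots,b_n)$ with $n<m$ and a spoiler move $c\in M$, produce a duplicator response $d\in N$. If $c=a_i$, play $b_i$. Otherwise, let $\eta_c\in X_\mch$ be the length-$m_*$ node with $c\in Q_{\eta_c}^M$ and set $\eta_0:=\eta_c\rstr m$; then $d$ must lie in $Q_{\eta_0}^N$, be distinct from every $b_j$, and realize the same $R$-edge pattern with $\bar b$ as $c$ has with $\bar a$. I would build $d$ in two stages: first choose a ``target cell'' $\nu_d\in X_\mch$ of length $m_*+1$ with $\nu_d\rstr m=\eta_0$ such that for every forced $R$-edge between $d$ and a $(k-1)$-tuple from $\bar b$, the template edge $\langle \nu_d(\ell),\nu_{b_{i_1}}(\ell),\dots,\nu_{b_{i_{k-1}}}(\ell)\rangle\in E_\ell$ holds for all $\ell\leq m_*$; then realize the full prescribed edge pattern inside $Q_{\nu_d}^N$ via the random-hypergraph description of the model completion $T^{m_*+1}$ (case (a) in the remark following the model-completion claim).

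The construction of $\nu_d$ proceeds level by level. For $\ell<m$, $\nu_d(\ell)=\eta_0(\ell)$ is forced, and the template-edge conditions at these levels are automatic: atomic $\tau(T^m)$-agreement of $\bar a$ and $\bar b$ gives $\nu_{b_j}(\ell)=\eta_{a_j}(\ell)$, and the corresponding $R$-edge in $M$ certifies the template edge by Definition \ref{d:t-zero}(5). For $m\leq \ell\leq m_*$, the Extension axiom \ref{d:template}(3) in $\mathbf{h}_\ell$ supplies a suitable $\nu_d(\ell)\in H_\ell$ since $f_\mch(\ell)\geq m$ by the choice of $m_*$, provided the number of forced $(k-1)$-tuples at level $\ell$ does not exceed $m$. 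The symmetric case---spoiler in $N$, duplicator in $M$---is analogous; since $M$-cells sit at level $m_*$ instead of $m_*+1$, the construction is one level shorter.

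The main obstacle I anticipate is the combinatorial bookkeeping: ensuring that at every extension step the number of forced $(k-1)$-tuples is at most $f_\mch(\ell)\geq m$. A naive count gives $\binom{n}{k-1}$ forced edges, which can exceed $m$ when $k\geq 3$. The natural resolution is to refine the back-and-forth so that at round $r$ only qf-formulas of length $\leq m-r$ need be preserved; this bounds the number of forced atomic $R$-formulas by $m-r$ and keeps them within the Extension parameter. Equivalently, one can drop the uniform EF framing and induct directly on the syntactic structure of $\vp$, treating the existential step as a realization problem for a partial qf-type of size controlled by the length budget of $\vp$. Setting up this length-sensitive invariant so that the Extension axiom fires cleanly at each level is the step that I expect to require the most care.
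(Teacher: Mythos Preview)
Your proposal is correct and tracks the paper's own argument closely: both reduce the lemma to a one-variable extension criterion and then realize the prescribed edge pattern using the template's Extension axiom together with model-completeness of $T^{m_*}$ (resp.\ $T^{m_*+1}$).

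Two small differences are worth recording. First, the paper carries the stronger invariant that $\bar a$ and $\bar b$ agree on quantifier-free $\tau(T^{m_*})$-formulas (its explanatory footnote says this explicitly; the displayed statement of $(\star)$ says $\tau(T^m)$, but the later assertion ``necessarily $\nu_i\rstr m_*=\rho_i$'' only follows from level-$m_*$ agreement). With that invariant the paper can take the target cell to be $\leaf_{m_*}(a)^\smallfrown\langle s\rangle$ and invoke Extension \emph{once}, at level $m_*$ only, rather than iterating it over $m\le\ell\le m_*$ as you do. Your weaker invariant and level-by-level application of Extension also works, at the cost of a slightly longer loop. Second, on the bookkeeping obstacle you correctly flag: the paper resolves it via exactly your second alternative---it drops the uniform EF framing and inducts directly on the syntax of $\vp$, so that the number $|C|$ of forced $R$-edges is bounded by the length budget of $\vp$, hence by $m\le f_\mch(m_*)$. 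So your anticipated ``most care'' step is indeed where the content lies, and your proposed fix is the one the paper uses.
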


\begin{proof}
To prove the Lemma by induction on complexity of formulas, it suffices to show:

\begin{quotation} 
\noindent $(\star)$  
Suppose $\vp(x,y_1,\dots, y_n)$ is a {complete} quantifier-free formula of $\tau(T^m)$ of length $\leq m$, so note $n<m$.  
Suppose $a_1, \dots, a_n \in |M|$ and $b_1, \dots, b_n \in  |N|$ have the same quantifier-free 
$\tau(T^m)$-type. 

Then there exists $a \in |M|$ such that $M \models \vp(a, a_1, \dots, a_n)$ \underline{if and only if} there exists $b \in |N|$ such that 
$N \models \vp(b, b_1, \dots, b_n)$. 
\end{quotation}
Without loss of generality, the sequences $a_1, \dots, a_n$ and $b_1, \dots, b_n$ are without repetition. 

For left to right, suppose that $a \in |M|$ exists and $a \in \{ a_1, \dots, a_n \}$, otherwise it is trivial.  
We will need notation to record edges and non-edges made by $a$. 
For $\bar{\ii}$ any sequence of elements of $\{ 1, \dots, n \}$, denote by $\bar{a}_{\bar{\ii}}$ the sequence 
$\langle a_{\bar{\ii}(\ell)} : \ell < \lgn(\bar{\ii}) \rangle$. 
Let 
\[ C = \{ \bar{\ii} = \langle i_{0}, \dots, i_{k-2} \rangle : \bar{\ii} \in \inc_{k-1}( \{ 1, \dots, n \} ), ~\langle a \rangle^\smallfrown\bar{a}_{\bar{\ii}} \in R^M~ \} \]
represent the set of $R$-edges made by $a$ to $\{ a_1, \dots, a_{n-1} \}$.  
Note that $|C| < n^k$. Correspondingly, let 
\[ D = \inc_{k-1}( \{ 1, \dots, n \} ) \setminus C \]
represent the set of non-$R$-edges made by $a$ to $\{ a_1, \dots, a_{n-1} \}$.  If $C = \emptyset$ finding a corresponding 
$b$ is immediate, so assume $C \neq \emptyset$. 

Each element $c$ of $M$ belongs to a unique predicate $Q_\eta$ with $\lgn(\eta) = m_*$; call it ``the $m_*$-leaf of $c$'' 
and write $\leaf_{m_*}(c) = \eta$.   Let $\rho = \leaf_{m_*}(a)$ and let $\rho_i = \leaf_{m_*}(a_i)$ for $i = 1,\dots, n$. 
 The definition of $T^{m_*}_0$ and the existence of $a$ tell us that necessarily 
\begin{quotation}
\noindent for every $\ii = \langle i_0, \dots, i_{k-2} \rangle \in C$, for every $\ell < {m_*}$, 
\[ \langle \rho(\ell), \rho_{i_0}(\ell), \dots, \rho_{i_{k-2}}(\ell) \rangle \in E_\ell. \]
\end{quotation}
Meanwhile each element $d$ of $N$ belongs to a unique predicate $Q_\eta$ with $\lgn(\eta) = {m_*}+1$; write 
$\leaf_{{m_*}+1}(d) = \eta$. So let $\nu_i = \leaf_{{m_*}+1}(b_i)$ for $i = 1, \dots, n$. Note that $\leaf_{m_*}$ and $\leaf_{{m_*}+1}$ 
a priori depend on the models $M$ and $N$, but by our assumption that $a_1,\dots, a_n$ and $b_1,\dots, b_n$  
have the same quantifier-free $\tau(T_m)$-type, necessarily $\nu_i \rstr {m_*} = \rho_i$ for $i= 1, \dots, n$. 
Apply Extension \ref{d:template}(3) to the set of $(k-1)$-tuples 
\[  \{   \langle \nu_{i_0}(\ell), \dots, \nu_{i_{k-2}}(\ell)  \rangle : \bar{\ii} = \langle i_0, \dots, i_{k-2} \rangle \in C  \} \]
recalling our choice of $m_*$, 
and let $s$ be the element of $H_{m_*}$ returned. Define $\nu = \eta^\smallfrown \langle s \rangle$. 
Now we have that  
\begin{quotation}
\noindent for every $\ii = \langle i_0, \dots, i_{k-2} \rangle \in C$, for every $\ell < {m_*}+1$, 
\[ \langle \nu(\ell), \nu_{i_0}(\ell), \dots, \nu_{i_{k-2}}(\ell) \rangle \in E_\ell. \]
\end{quotation}
So by definition of $T^{{m_*}+1}_0$,  
$\vp(x,b_1,\dots, b_n)$ is consistent with $N$, and $b$ exists because $N$ is model complete. 

The other direction, right to left, is simpler: suppose that $b \in |N|$ exists and $b \notin \{ b_1, \dots, b_n \}$. 
As before, define $C$ to be the set of representatives of edges.  
Suppose $\leaf_{{m_*}+1}(b) = \nu$ and $\leaf_{{m_*}+1}(b_i) = \nu_i$. Then since $b$ exists and $N$ 
is a model of $T^{{m_*}+1}_0$, necessarily 
\begin{quotation}
\noindent for every $\ii = \langle i_0, \dots, i_{k-2} \rangle \in C$, for every $\ell < {m_*}+1$, 
\[ \langle \nu(\ell), \nu_{i_0}(\ell), \dots, \nu_{i_{k-2}}(\ell) \rangle \in E_\ell. \]
\end{quotation}
A fortiori, then, 
\begin{quotation}
\noindent for every $\ii = \langle i_0, \dots, i_{k-2} \rangle \in C$, for every $\ell < {m_*}$, 
\[ \langle \nu(\ell), \nu_{i_0}(\ell), \dots, \nu_{i_{k-2}}(\ell) \rangle \in E_\ell \]
\end{quotation}
so by definition of $T^{m_*}_0$, $\vp(x,a_1,\dots, a_n)$ is consistent with $M$, and since it is complete 
$\vp \vdash Q_{(\nu \rstr {m_*})}(x)$,  and $a$ exists because $M$ is model complete. 
\end{proof}

\begin{cor}
``The limit theory of $\langle T^m : m < \omega \rangle$ is well defined and is a complete, model complete theory which extends $T_0$.''    
For every $m < \omega$ and every formula $\vp$ of $\tau(T^m)$ in at least one free variable,\footnote{since we do not
have constants in the language} 
for some quantifier-free formula $\psi$ of $\tau(T^m)$, for every 
$n$ large enough, we have that 
\[ (\forall \bar{x}) \left(   ~\vp(\bar{x}) \equiv \psi(\bar{x})~   \right) \in T^n. \]
\end{cor}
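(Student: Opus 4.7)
The plan is to define $T_\infty$ as the limit theory via Lemma \ref{main-qe-lemma}, then deduce the QE clause by combining quantifier elimination for each individual $T^{m'}$ (coming from model completion plus AP) with a transfer argument back to $T^n$ via Extension; the main obstacle will be the transfer step. First I would define $T_\infty$ to be the set of $\tau(T_0)$-sentences $\chi$ that eventually lie in $T^n$, i.e.\ $\chi \in \tau(T^{m_0})$ for some $m_0 < \omega$ and $T^n \models \chi$ for all sufficiently large $n$. Any $\tau(T_0)$-sentence uses only finitely many predicates and has finite length, so setting $m = \max(m_0, |\chi|)$ puts $\chi$ among sentences of $\tau(T^m)$ of length $\leq m$; Lemma \ref{main-qe-lemma} then gives stabilization of its truth in $T^n$. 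Since each $T^n$ is complete (as the model completion of $T^n_0$, a universal theory with JEP), this stabilized value is definite, so $T_\infty$ is well-defined and complete. That $T_\infty$ extends $T_0$ is immediate, as each $T_0$-axiom involves only finitely many $Q_\eta$ and is among the axioms of $T^m_0 \subseteq T^m$ for all sufficiently large $m$.

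For the QE clause, given $\varphi(\bar{x}) \in \tau(T^m)$, I would use that for $m' \geq m$ large enough, $T^{m'}$, being the model completion of the universal theory $T^{m'}_0$ in a finite relational language with JEP and AP, admits quantifier elimination; this yields a quantifier-free $\psi(\bar{x}) \in \tau(T^{m'})$ with $T^{m'} \models \forall \bar{x}(\varphi \equiv \psi)$. The sentence $\forall \bar{x}(\varphi \equiv \psi)$ lies in $\tau(T^{m'})$ and has finite length, so by Lemma \ref{main-qe-lemma} its truth in $T^n$ stabilizes as $n \to \infty$; it remains to verify the stabilized value is $\top$. I would do this by showing that for any $M \models T^n$ with $n \geq m'$ sufficiently large, the reduct $M\!\restriction_{\tau(T^{m'})}$ is itself a model of $T^{m'}$, whence the equivalence $\varphi \equiv \psi$ holds in $M\!\restriction_{\tau(T^{m'})}$, hence in $M$, hence in $T^n$.

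The hard part is the reduct claim $M\!\restriction_{\tau(T^{m'})} \models T^{m'}$. I plan to argue by existential closedness using Extension: given an existential formula $\exists y\, \theta(y, \bar{a})$ of $\tau(T^{m'})$ witnessed by $y^*$ in some $T^{m'}_0$-extension $N' \supseteq M\!\restriction_{\tau(T^{m'})}$, I would extend $M$ itself to a $T^n_0$-model by adjoining $y^*$ with the level-$\leq m'$ data inherited from $N'$, no additional $R$-edges to elements of $M$, and at each level $\ell$ with $m' < \ell \leq n$ a predicate chosen by applying Extension (Definition \ref{d:template}(3)) to the finitely many $(k{-}1)$-tuples of leaves of $\bar{a}$ indicated by the declared $R$-edges in $\theta$. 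Choosing $m'$ large enough that $f_\mch(\ell)$ exceeds this finite bound for all $\ell \geq m'$ (possible by $\lim f_\mch = \infty$) makes each application of Extension succeed; the resulting extension of $M$ is a $T^n_0$-model realizing $\exists y\,\theta(y,\bar{a})$, and existential closedness of $M$ as a model of $T^n$ then forces the witness to exist in $M$ itself. Model completeness of $T_\infty$ follows from the QE clause combined with completeness.
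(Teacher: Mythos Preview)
Your definition of $T_\infty$ and the argument for completeness and extension of $T_0$ are fine, and this is essentially how the paper intends the first part to follow from Lemma~\ref{main-qe-lemma}.

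The QE argument, however, has a real gap in the reduct claim. You choose $m'$ ``large enough that $f_\mch(\ell)$ exceeds this finite bound for all $\ell \geq m'$,'' but the bound in question is the number of $R$-edges declared in the generic formula $\theta$, and $\theta$ ranges over \emph{all} quantifier-free $\tau(T^{m'})$-formulas, which can have arbitrarily many $R$-edges. So $m'$ would have to depend on $\theta$, which is circular. In fact the reduct claim is simply false: take $k=2$, let $H_0 = \{0,1\}$ with $E_0 = \{(0,0),(1,1)\}$ (so no edge between $0$ and $1$) and $f_\mch(0)=1$; then for any $n\geq 1$, a model $M\models T^n$ has no $R$-edges between $Q_{\langle 0\rangle}$ and $Q_{\langle 1\rangle}$, so $M\rstr\tau(T^0)$ is not a random graph and hence not a model of $T^0$. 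The same phenomenon recurs at every level, so no choice of $m'$ rescues the claim. A secondary issue is that your $\psi$ lands in $\tau(T^{m'})$, whereas the corollary requires $\psi\in\tau(T^m)$.

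The intended route avoids the reduct claim entirely. The proof of Lemma~\ref{main-qe-lemma} actually establishes the stronger parameter-level statement $(\star)$: tuples $\bar a\in M\models T^{m_*}$ and $\bar b\in N\models T^{m_*+1}$ with the same quantifier-free $\tau(T^m)$-type satisfy the same $\tau(T^m)$-formulas of length $\leq m$. Bouncing through an auxiliary $N$, this gives that \emph{within} any $M\models T^n$ for $n\geq m_*$, the quantifier-free $\tau(T^m)$-type of a short tuple already determines which $\tau(T^m)$-formulas of length $\leq m$ it satisfies. Hence $\varphi$ is equivalent, uniformly in all such $T^n$, to the disjunction of those complete quantifier-free $\tau(T^m)$-types whose realizations satisfy $\varphi$ --- a quantifier-free formula of $\tau(T^m)$, as required.
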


\begin{lemma} \label{t:rank1}
The theory $T$ is simple rank $1$.
\end{lemma}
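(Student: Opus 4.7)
\emph{The plan is} to pin down $\operatorname{acl}$ and the dividing structure directly from the quantifier elimination of the preceding corollary, and then read off simplicity and SU-rank $1$. First I would verify $\operatorname{acl}(A) = A$ for every set $A$: by quantifier elimination, any formula $\vp(x; \bar a)$ over $A$ is a Boolean combination of equalities, instances of $R$, and unary predicates $Q_\eta$, and using Claim \ref{c:completion} together with model-completeness, any consistent such formula has infinitely many realizations in models of $T$, so no new element is algebraic over $A$.

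Next I would verify that only algebraic formulas divide. Given an $A$-indiscernible sequence $\langle \bar b_n : n<\omega\rangle$, the leaves $\leaf(b_{n,i})$ depend only on $i$ (since leaves are $\emptyset$-definable), so the leaf pattern is constant along the sequence, and each coordinate sequence $\langle b_{n,i} : n<\omega \rangle$ is either constant or pairwise distinct. A formula $R(x, \bar b_n)$ or its negation asserted along the sequence prescribes a pattern of $R$-edges and non-edges from $x$; by Claim \ref{c:completion} combined with model-completeness this pattern is consistent (Extension supplies compatible leaves at every finite stage, and non-edges never obstruct). Unary formulas on $x$ are parameter-free. Hence only $x = a$ with $a \notin A$ divides.

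With these in hand I would define $A \downarrow_C B$ to mean $A \cap B \subseteq C$; this is invariant, symmetric, monotonic and base-monotonic, transitive, of finite character, and has local character via $C := A \cap B$. For extension and for the independence theorem the only nontrivial check is the realization of a combined pattern of edges over a union of parameter sets; by quantifier elimination this reduces to consistency, which is exactly Claim \ref{c:completion}. By Kim--Pillay, $T$ is simple and $\downarrow$ is non-forking.

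Finally, SU-rank $1$ falls out: for $a \notin A$ and $B \supseteq A$, either $a \in B$ (algebraic, SU-rank $0$) or $\{a\} \cap B = \emptyset \subseteq A$, whence $\tp(a/B)$ does not fork over $A$. So every non-algebraic type has SU-rank exactly $1$. The hard part will be the consistency check underlying extension and the independence theorem, and this is precisely where Extension (Definition \ref{d:template}(3)), as packaged by Claim \ref{c:completion}, does the real work.
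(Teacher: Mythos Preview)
Your approach is correct but takes a genuinely different route from the paper. The paper argues directly: assume some $\vp(\bar{x},\bar{y})$ $n$-divides along an indiscernible sequence of length $\kappa \geq (2^{\aleph_0})^+$, use the large cofinality to pin down the leaves of all parameters (and of witnesses $\bar{b}_i$) once and for all, pass to a refinement of $\vp$ that is complete for $\{=,R\}$, and then show $\{\vp(\bar{x},\bar{a}_i):i<\kappa\}$ is in fact consistent by inducting on the coordinates of $\bar{x}$. You instead set up the trivial independence relation $A \downarrow_C B \iff A \cap B \subseteq C$, verify the Kim--Pillay axioms, and read off both simplicity and the SU-rank. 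Your route cleanly identifies the forking relation and makes rank~$1$ a one-line consequence; the paper's is more self-contained (no appeal to Kim--Pillay), and its pigeonhole on $\kappa > 2^{\aleph_0}$ replaces your observation that leaves are constant along any indiscernible sequence.

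One point in your sketch needs more care. In both your direct dividing paragraph and your independence-theorem check, you must rule out collisions between positive and negative $R$-instances: it is not enough to say ``non-edges never obstruct,'' since a priori the union of the two types (or the conjunction $\bigwedge_n \vp(x,\bar{b}_n)$) could demand $R(x,\bar c)$ and $\neg R(x,\bar c')$ with $\bar c$ a permutation of $\bar c'$. The paper handles this explicitly (their third bullet) via indiscernibility and transitivity of equality, reducing any cross-block collision to a within-block one, which contradicts consistency of a single $\vp(x,\bar{a}_i)$. In your Kim--Pillay route the analogous check is that $B_1 \cap B_2 \subseteq M$ together with $a_1 \equiv_M a_2$ forces any colliding parameter tuple to lie entirely in $M$, where the two types already agree. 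Claim~\ref{c:completion} supplies only the positive half (a compatible leaf for the required edges); the no-collision argument is separate and should be stated.
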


\begin{proof}
Assume for a contradiction that $\langle \bar{a}_i : i < \kappa \rangle$, $\kappa= \cf(\kappa) \geq (2^{\aleph_0})^+$ witnesses that some 
formula $\vp(\bar{x}, \bar{y})$ $n$-divides, in a large $\kappa$-saturated model $M \models T$.  
Without loss of generality, possibly adding dummy variables, 
$\lgn(\bar{x}) = \lgn(\bar{y}) =: m$.  

For each $i < \kappa$, let $\bar{b}_i$ be such that $M \models \vp[\bar{b}_i, \bar{a}_i]$. 
Since $\kappa$ is large enough (i.e., since $\cf(\kappa) > 2^{\aleph_0}$), for some $\uu \in [\kappa]^\kappa$, for each $\ell < m$ there is 
$\nu_\ell  \in H_\infty$ such that $\leaf(\bar{b}_{i,\ell})$ is constantly equal to $\nu_\ell$, and there is    
$\rho_\ell \in H_\infty$ such that $\leaf(\bar{a}_{i,\ell})$ is constantly equal to $\rho_\ell$.   

Let $\vp^\prime$ be an extension of $\vp$ which is complete for $\{ = , R \}$  (it will obviously only contain 
information about unary predicates up to some finite level)
such that $M \models \vp^\prime[\bar{b}_i, \bar{a}_i]$ 
for $i \in \vv \in [\uu]^\kappa$.  We may assume $\vp^\prime$ is quantifier-free. 
Without loss of generality, $\vp^\prime$ does not imply any instances of equality among the $x$'s or between the 
$x$'s and the $y$'s.  In what follows, replace $\vp$ by $\vp^\prime$ and $\langle \bar{a}_i : i < \kappa \rangle$ by 
$\langle \bar{a}_i : i \in \vv \rangle$.
 
We would like to show that 
\[ \Sigma(\bar{x}) = \{ \vp(\bar{x}, \bar{a}_i) : i < \kappa \} \mbox{ is consistent}. \] 
It suffices by induction on $j < m$ to choose elements $b_j$ so that 
$b_j$ realizes the set of formulas $\Sigma^j(b_0, \dots, b_{j-1}, x_j)$ where $\Sigma^j$ is the restriction of $\Sigma$ to the variables 
$x_0, \dots, x_j$. In the case $\ell(\bar{x})=1$, write $\nu = \leaf(x)$, and this case follows from three simple observations:
\begin{itemize}
\item $\vp$ is without loss of generality quantifier-free; 
we assumed no instances of equality between the $x$'s, and our theory has no algebraicity.  
\item the template hypergraphs contribute no restriction to this set of formulas, since if 
$R(x,a_{j_0}, \dots, a_{j_{k-2}})$ is implied by $\Sigma$ then we know by our construction that 
$(\nu, \rho_{j_0}, \dots, \rho_{j_{k-2}}) \in E_\infty$. 
\item the indiscernibility of $\langle \bar{a}_i : i < \kappa \rangle$, transitivity of equality, and consistency of each instance  
$\vp(x, \bar{a}_i)$ together 
mean that if $R(x,a_{j_0}, \dots, a_{j_{k-2}})$ is implied by $\Sigma$ and $\neg R(x,a_{\ell_0}, \dots, a_{\ell_{k-2}})$ is implied by $\Sigma$, 
then no permutation of $\langle a_{j_0}, \dots, a_{j_{k-2}}\rangle$ is equal to $\langle a_{\ell_0}, \dots, a_{\ell_{k-2}} \rangle$ (so the `positive' and `negative' edges required by $\Sigma$ cause no explicit contradiction). 
\end{itemize} 
Observe that the inductive step, since we will have already chosen the earlier values $b_\ell$ $(\ell < j$), 
will reduce to the case $\lgn(\bar{x}) = 1$ (using $\lgn(\bar{y}) = m+j$).  
This is enough to deduce the consistency of $\Sigma$, so there is no dividing. 
\end{proof} 

\begin{concl}
Given any template $\mch$, the universal theory $T_0 = T_0(\mch)$ has a model completion $T = T(\mch)$ which is 
well defined, eliminates quantifiers, is simple rank $1$, and is equal to the limit of $\langle T^m : m < \omega \rangle$. 
\end{concl}

\begin{disc} \label{c:eq}
We could have defined the theory to be ``based on'' predicates naming classes of crosscutting finite equivalence relations, 
rather than levels of trees, in the natural way.   Alternatively, we could make $E_n$ be a $k$-place relation on $\prod_{\ell \leq n} H_\ell$. 
\end{disc}

\vspace{3mm}

\section{A combinatorial property}

In this section we give Definition \ref{d:1167}, which is supposed to capture what is simple about the theories of  
\S \ref{t:theories}, not necessarily what is complicated about them.  In \S \ref{s:flexible} we shall use this to give a sufficient condition 
for ultrafilters to saturate such theories.  First let us motivate the property. 

Suppose, with no assumptions on $T$ or $\vp$, we have a sequence of instances of $\vp$ 
\[ \vp(\bar{x}, \bar{a}_0), \dots, \vp(\bar{x}, \bar{a}_{s-1}) \]
forming  a partial type, 
and suppose we replace each $\bar{a}_i$ by a sequence $\bar{b}_i$ having the same type over the empty set. 
(We don't ask that $\bar{a}_i$ and $\bar{a}_j$ have the same type for $i \neq j$, just that 
$\bar{a}_i$ and $\bar{b}_i$ have the same type for each $i$.) 
Then a priori, 
\[ \vp(\bar{x}, \bar{b}_0), \dots, \vp(\bar{x}, \bar{b}_{s-1}) \]
need not remain a partial type.  An example is $\vp(x; y_0, y_1) = y_0 < x < y_1$ in the theory of 
dense linear orders: any two pairs of increasing elements have the same type over the empty set, but we can 
choose the $\bar{a}$'s to be a sequence of intervals which are concentric, and the $\bar{b}$'s 
a sequence which are disjoint. Similar examples arise 
whenever we have a tuple beginning two indiscernible sequences, one which witnesses dividing of $\vp$ and one which 
does not.

An example of $(T, \vp)$ where such a substitution \emph{does} remain a partial type, for 
trivial reasons, is $T = \trg$ the theory of the random 
graph, and $\vp(x,y) = R(x,y)$, using only the positive instance.  Note that $\vp(x;y,z) = R(x,y) \land \neg R(x,z)$ would not 
work, however, since in changing from $\bar{a}$'s to $\bar{b}$'s we could introduce collisions among the parameters.  
A less trivial example will be the positive instance of the edge relation in the theories of \S \ref{t:theories}, which in fact satisfy a stronger condition, 
(as does the random graph), as we shall now see. 

That is: among the examples of $(T, \vp)$ where this \emph{does} work, we can ask just how much of each type 
we need to preserve when changing the parameters from $\bar{a}_i$'s to $\bar{b}_i$'s. 
Rather than preserving all formulas, 
perhaps it would be sufficient to enumerate some formulas of the type of each parameter in some coherent way, 
and then preserve some finite initial segment of each of these lists. 
It is reasonable that the length of the initial segment needed would depend on $s$, 
the number of instances we are dealing with.  This is essentially what the next 
definition says.\footnote{The provisional name is because it captures a key property of theories from \cite{MiSh:1167} = [MiSh:1167].}

\begin{defn}  \label{d:1167}  We say that $(T, \vp(\bar{x}, \bar{y}))$ has the \emph{pseudo-nfcp} when $T$ is countable and 
we can assign to each type 
$p \in \mcp$, where 
\[ \mcp := \{ p :  p \in  \ts_{\ell(\bar{y})}(\emptyset) \mbox{ and $p$ contains the formula $\exists \bar{x} \vp(\bar{x}, \bar{y})$} \} \] 
a function $f_p: \omega \rightarrow \omega$ 
such that: 
\begin{enumerate}
\item $($continuity$)$ for each $m<\omega$, if $f_p(m) = r$, then for some $\psi(\bar{y}) \in p$, for any other $q \in \mcp$, 
if $\psi \in q$, then $f_q(m) = r$. 

\item \emph{notation:} if $p = \tp(\bar{a}) \in \mcp$, we may write $f_{\bar{a}}$ for $f_p$. 

\item for every $s \geq 1$ there is some $n < \omega$  such that:
\\ whenever $\bar{a}_0, \dots, \bar{a}_{s-1}$, 
$\bar{b}_0, \dots, \bar{b}_{s-1}$ are sequences from $\mathfrak{C}_T$, hence each realizing types in $\mcp$, and 
\[ f_{\bar{a}_\ell} \rstr n = f_{\bar{b}_\ell} \rstr n \mbox{ for all $\ell < s$ } \]
and $\{ \vp(\bar{x},\bar{a}_\ell) : \ell < s \}$ is a partial type, then $\{ \vp(\bar{x}, \bar{b}_\ell) : \ell < s \}$ is also a partial type. 
In the proofs that follow, we will refer to this by saying 
\\ ``$(T, \vp)$ is $(s,n)$-compact.'' 
\end{enumerate}
\end{defn}

\begin{disc}  \emph{ }
\begin{enumerate}
\item So Definition \ref{d:1167} is a kind of compactness demand, that is, given $(T, \vp(\bar{x}, \bar{y}))$, 
to know if $\mathfrak{C}_T \models (\exists{\bar{x}}) \bigwedge_{\ell<s} \vp(\bar{x}, \bar{b}_\ell)$ we need to 
know just finite approximations to the type of each $\bar{b}_\ell$ $($not of ${\bar{b}_0}^\smallfrown \cdots 
\bar{b}_{s-1}$!$)$ and the size of ``finite,'' represented here by $n$, depends just on $s$ $($and on $T$ and $\vp$$)$.

\item We could have defined the range of each function $f$ to be finite subsets 
of $\omega$, as would be convenient in $\ref{c:example2}$, or a more complicated set 
$($of bounded, say countable, size$)$; or we could have used $\{ 0, 1\}$. 

\item We could extend the definition to uncountable theories with more work. 
\end{enumerate}
\end{disc}

\begin{rmk} \label{r:1167} In the context of Definition $\ref{d:1167}$, when that definition is satisfied, we may define 
two functions $F$ and $G$.
\begin{itemize}
\item[(a)] define $F: \omega \rightarrow \omega$ by 
\[ s \mapsto \min \{ n < \omega : \mbox{$(T, \vp)$ is $(s,n)$-compact} 
\} \]
which expresses: in order for $s$ instances to remain consistent, 
their functions $f$ must be preserved at least up to $F(s)$.  This is well defined since we assume 
the definition is satisfied. There are two cases:  
\begin{enumerate}
\item $\lim_{s \rightarrow \infty} F(s) \rightarrow \infty$. 
\item $\lim_{s \rightarrow \infty} F(s) = N < \infty$. 
\end{enumerate}
\item[(b)] define $G: \omega \rightarrow \omega \cup \{ \infty \}$ by: 
$n \mapsto \infty$ if $(T, \vp)$ is $(s,n)$-compact 
for all $n<\omega$, and otherwise 
\[  n \mapsto \max \{ s < \omega : (T, \vp) \mbox{ is $(s,n)$-compact }
\} \]
which expresses that if the functions $f$ are preserved up to $n$ then $G(n)$ instances can 
safely remain consistent.  
Here $(\star)$ $\lim_{n \rightarrow \infty} G(n) = \infty$, possibly attaining the limit already at some finite $n$. 
\end{itemize}
\end{rmk}

\begin{claim} \label{c:example2}
Let $T$ be one of the theories from $\S \ref{t:theories}$, built from $\mch$, $f_\mch$ of arity $k$.  Let $\vp(x,y_0, \dots, y_{k-2}) = 
R(x; y_0, \dots, y_{k-2})$.  Then $(T, \vp)$ has the pseudo-nfcp.
\end{claim}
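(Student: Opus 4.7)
The plan is to define, for each $p \in \mcp$, a function $f_p:\omega \to \omega$ where $f_p(m)$ codes (under a fixed effective coding) the quantifier-free type of $\bar{y}$ in the finite sublanguage consisting of equality and the predicates $\{Q_\eta : \lgn(\eta)\leq m\}$. Concretely, $f_p(m)$ records the equality pattern among the entries of $\bar{y}$ together with the truncations $\langle \leaf(y_j)\rstr m : j<k-1\rangle$ prescribed by $p$. Continuity (condition~(1) of Definition~\ref{d:1167}) is automatic: the witnessing formula $\psi_{p,m}(\bar{y})$ is the conjunction of the equality/inequality atoms and the predicate atoms $Q_{\leaf(y_j)\rstr m}(y_j)$ prescribed by $p$; this $\psi_{p,m}$ lies in $p$, and any $q \in \mcp$ containing $\psi_{p,m}$ automatically has $f_q(m) = f_p(m)$.

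For condition (3), given $s\geq 1$, let $n_s := \min\{\, m : f_\mch(m')\geq s \text{ for all } m'\geq m\,\}$ and take $n := n_s + 2$. Suppose $\bar{a}_0,\dots,\bar{a}_{s-1},\bar{b}_0,\dots,\bar{b}_{s-1}$ have $f_{\bar a_\ell}\rstr n = f_{\bar b_\ell}\rstr n$ for every $\ell<s$, and assume $\Sigma:=\{R(x,\bar a_\ell):\ell<s\}$ is consistent. Since $\tp(\bar a_\ell)\in\mcp$ and $R$ is $k$-uniform, each $\bar a_\ell$ (and hence, by agreement of the equality pattern, each $\bar b_\ell$) consists of distinct entries. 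Let $c$ realize $\Sigma$ in the monster model and set $\rho:=\leaf(c)\in H_\infty$; by clause~(5) of Definition~\ref{d:t-zero},
\[
\langle \rho(m'),\leaf(a_{\ell,0})(m'),\dots,\leaf(a_{\ell,k-2})(m')\rangle \in E_{m'} \quad \text{for all }\ell<s,\ m'<\omega.
\]
Setting $\nu:=\rho\rstr n\in X_\mch$, the hypothesis $\leaf(a_{\ell,j})\rstr n = \leaf(b_{\ell,j})\rstr n$ transfers this edge condition to the $\bar b_\ell$'s on all levels below $n$:
\[
\langle \nu(m'),\leaf(b_{\ell,0})(m'),\dots,\leaf(b_{\ell,k-2})(m')\rangle \in E_{m'} \quad \text{for all }\ell<s,\ m'<n.
\]

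Since $\lgn(\nu)=n>n_s$, Claim~\ref{c:completion} now applies with $t=s$ to the $(k-1)$-tuples $\langle \leaf(b_{\ell,0}),\dots,\leaf(b_{\ell,k-2})\rangle$, producing $\nu_*\in H_\infty$ extending $\nu$ and such that $\langle \nu_*,\leaf(b_{\ell,0}),\dots,\leaf(b_{\ell,k-2})\rangle\in E_\infty$ for each $\ell<s$. By quantifier elimination (the Corollary after Lemma~\ref{main-qe-lemma}) together with the ``random hypergraph'' behaviour of the model completion on each predicate $Q^M_{\nu_*\rstr m}$, the set of formulas $\{R(x,\bar b_\ell):\ell<s\}\cup\{Q_{\nu_*\rstr m}(x):m<\omega\}\cup\{x\neq b_{\ell,j}:\ell<s,\ j<k-1\}$ is a consistent partial type, and is realized by some $c'$ in the monster model. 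This verifies $\oplus_{s,n}$; the reverse direction (starting instead from a realization of $\{R(x,\bar b_\ell):\ell<s\}$) is entirely symmetric.

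The only substantive step is the application of Claim~\ref{c:completion}; everything else is bookkeeping. The choice of threshold $n_s$ is exactly the hypothesis of that claim, which is why the theory is $1167$-like: the Extension axiom in the template guarantees that once finitely many initial levels of leaf data are preserved (enough to accommodate the $s$ required $(k-1)$-tuples), the remaining levels of the template, and hence the existential behaviour of $R$, can be arranged freely on the $\bar b$-side. In particular $F(s)$ of Remark~\ref{r:1167} may be taken to be $n_s+2$, which grows to infinity with $s$ since $f_\mch$ does.
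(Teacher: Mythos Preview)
Your proof is correct and follows essentially the same approach as the paper's. The only cosmetic differences are that you index $f_p$ by tree level (so $f_p(m)$ records $\leaf(y_j)\rstr m$) whereas the paper indexes by an enumeration of the predicates $Q_\eta$, and that you invoke Claim~\ref{c:completion} directly to extend $\nu$ to $\nu_*\in H_\infty$, whereas the paper simply cites Lemma~\ref{main-qe-lemma}; the underlying mechanism is the same Extension property of the template. One harmless off-by-one: with your definition, $f_{\bar a_\ell}\rstr n = f_{\bar b_\ell}\rstr n$ only guarantees $\leaf(a_{\ell,j})\rstr(n-1)=\leaf(b_{\ell,j})\rstr(n-1)$, not $\rstr\, n$; but since you took $n=n_s+2$ this is absorbed and Claim~\ref{c:completion} still applies with $\lgn(\nu)=n-1>n_s$. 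Your final remark that $F(s)$ ``may be taken to be'' $n_s+2$ should be read as an upper bound on $F(s)$, not its exact value; whether $F(s)\to\infty$ depends on the template, as the paper also notes.
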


\begin{proof}
In this context, by quantifier elimination, the set of $1$-types over the empty set are the set of ``leaves'', that is, each $1$-type is specified by 
choosing some $\eta \in \leaves(X_\mch)$ and considering $\{ Q_{\eta \rstr n} : n < \omega \}$. 

If $k = 2$, this also specifies $\mcp$. Otherwise, specifying a type $p(y_0, \dots, y_{k-2}) \in \mcp$ involves 
specifying the leaf of each $y_i$, and if two elements share the same leaf, whether they are equal. 

Consider any enumeration $\langle \psi_i : 1 \leq i < \omega \rangle$ of the predicates $Q_\eta(y)$ of $\tau(T)$.
which enumerates in nondecreasing order of $\lgn(\bar{\eta})$.
Fix also in advance an enumeration of the subsets of $(k-2) \times (k-2)$, and of the subsets of $k-2$. 
For each $p \in \mcp$, let $f(0)$ code the instances of equality among $y_0, \dots, y_{k-2}$, and for $1 \leq m < \omega$, 
let $f(m)$ code which subset of $\{ y_0, \dots, y_{k-2} \}$ has the $m$-th predicate as part of their type. 
[Alternately, we could have enumerated the predicates with different variables: $Q_0(y_0)$, $Q_0(y_1)$, \dots, and let $f$ 
take values in $\{ 0, 1 \}$.]

Now, if we preserve initial segments of $f$, we clearly hold constant the types of the parameters up to some level $k$ in our hard-coded tree. 
Lemma \ref{main-qe-lemma} tells us that $m$ exists as a function of $s$, as desired. 

Unless $\mch$ is very uncomplicated (for example, cliques all the way up) 
the theory will normally be in case $(a)(1)$ of $\ref{r:1167}$. 
\end{proof}

\vspace{3mm}

\section{A separation via flexibility}  \label{s:flexible}

\tcb{The theories built above are simple rank one (\ref{t:rank1} above), thus, they are low. In this section, we consider flexible ultrafilters,
those which Kunen called ``OK'', which are necessary to saturate any non-low theory in Keisler's order (see \cite{mm4}).}

\begin{defn}
Recall that the ultrafilter $\de$ on $I$, $|I| = \lambda$ is \emph{flexible} if it has a regularizing family below any nonstandard integer, 
that is, for every sequence of natural numbers $\langle n_i : i \in I \rangle$ such that $\prod_{i \in I} n_i /\de > \aleph_0$, 
there is $\{ X_\alpha : \alpha < \lambda \} \subseteq \de$ such that for all $i \in I$, 
\[ | \{ \alpha < \lambda : i \in X_\alpha \} | \leq n_i. \]
\end{defn}

\begin{defn}
Recall that a necessary and sufficient condition for a regular ultrafilter $\de$ on $I$, $|I| = \lambda$ to be \emph{good for the 
random graph} is that for any infinite $M$ and any $A, B \subseteq M^I/\de$ such that $|A| + |B| \leq \lambda$ and $A \cap B = \emptyset$, 
there is an internal predicate $P$ such that $A \subseteq P$ whereas $B \cap P  = \emptyset$. 
\end{defn}

\begin{theorem}  \label{t:flex}
Suppose $\de$ is a regular ultrafilter on $I$, $|I| = \lambda$ which is flexible and good for the random graph. 
Suppose $(T, \vp)$ has the pseudo-nfcp and $M \models T$. Then $M^I/\de$ is $\lambda^+$-saturated for positive $\vp$-types. 
\end{theorem}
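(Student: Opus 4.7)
The plan is to realize the finitely consistent positive $\vp$-type $\Sigma = \{\vp(\bar{x}, \bar{a}_\alpha) : \alpha < \lambda\}$ coordinatewise. Fix representatives $\bar{a}_\alpha = [\bar{a}_\alpha^i : i \in I]/\de$; by \Lost theorem it suffices to choose $\bar{x}^i \in M$ so that $\{i : M \models \vp(\bar{x}^i, \bar{a}_\alpha^i)\} \in \de$ for each $\alpha$. The mechanism will be: on each coordinate $i$, pick $\bar{x}^i$ to realize the finite set $\{\vp(\bar{x}, \bar{a}_\alpha^i) : \alpha \in u_i\}$, where $u_i \subseteq \lambda$ comes from a tailored regularizing family $\{E_\alpha : \alpha < \lambda\}$ via $u_i = \{\alpha : i \in E_\alpha\}$, and rely on the $1167$-like property to certify that this finite set is consistent with $T$ (hence realized in $M \preceq \mathfrak{C}_T$).

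Using $\aleph_1$-incompleteness of $\de$, fix a decreasing chain $B_0 \supseteq B_1 \supseteq \cdots$ in $\de$ with $\bigcap_m B_m = \emptyset$, and let $n_i := \max\{m : i \in B_m\}$, so that $n_i \to \infty$ modulo $\de$. Since $G(n) \to \infty$ (Remark \ref{r:1167}(b)), applying flexibility to $\langle G(n_i) : i \in I \rangle$ yields a regularizing family $\{E_\alpha : \alpha < \lambda\} \subseteq \de$ with $|u_i| \leq G(n_i)$, so that the combinatorial hypothesis $\oplus_{|u_i|, n_i}$ of Definition \ref{d:1167}(3) is available at each $i$. For each pair $(\alpha, m)$, continuity and \Lost theorem produce a $\de$-large set $W_{\alpha, m} := \{i : f_{\bar{a}_\alpha^i}(m) = f_{\bar{a}_\alpha}(m)\}$. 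Once each $E_\alpha$ has been replaced by a $\de$-large subset $E_\alpha^\star \subseteq E_\alpha$ on which $f_{\bar{a}_\alpha^i} \rstr n_i = f_{\bar{a}_\alpha} \rstr n_i$, the rest is immediate: the finite consistency of $\Sigma$ in $N = M^I/\de$ says each $\{\vp(\bar{x}, \bar{a}_\alpha) : \alpha \in u_i\}$ is a partial type in $\mathfrak{C}_T$, and $\oplus_{|u_i|, n_i}$ transports this to a partial type $\{\vp(\bar{x}, \bar{a}_\alpha^i) : \alpha \in u_i\}$ realized in $M$, giving $\bar{x}^i$.

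The main obstacle is the simultaneous $f$-matching step: $\aleph_1$-incompleteness together with the sets $W_{\alpha, m}$ lets one diagonalize for a single $\alpha$ at a time, but the regularizing bound $|u_i| \leq G(n_i)$ must be preserved when this refinement is carried out uniformly over all $\lambda$ many parameters at once. This is where goodness for the random graph is meant to do the essential work: the data $\{\psi_{\alpha, m} : \alpha < \lambda, m < \omega\}$ of continuity witnesses encodes, on each coordinate, a positive pattern of parameter-formulas one wants held on $E_\alpha^\star$, and framing this as a positive internal-separation problem over $N$ in the sense of the defining property of good-for-the-random-graph ultrafilters should produce, uniformly in $\alpha$, the required $\de$-large refinements. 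With this uniform matching in place, the argument of the previous paragraph closes the proof.
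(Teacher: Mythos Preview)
Your overall architecture matches the paper's: regularize below a nonstandard integer, then invoke $\oplus_{|u_i|,n}$ at each index to transport consistency from $N$ to $M$. But the argument has a real gap at exactly the step you flag as the ``main obstacle,'' and your diagnosis of that obstacle is off.

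First, note that preserving the regularizing bound is \emph{not} the problem: if $E_\alpha^\star \subseteq E_\alpha$ then the new $u_i^\star \subseteq u_i$, so $|u_i^\star| \leq G(n_i)$ automatically. The real question is whether each $E_\alpha^\star := E_\alpha \cap \{ i : f_{\bar{a}_\alpha^i} \rstr n_i = f_{\bar{a}_\alpha} \rstr n_i \}$ lies in $\de$ at all. You fixed $\langle n_i \rangle$ \emph{before} looking at the parameters, from an arbitrary decreasing chain. For each $\alpha$ there is a natural ``agreement level'' $n(\alpha,i) := \max\{ n : f_{\bar{a}_\alpha^i}\rstr n = f_{\bar{a}_\alpha}\rstr n \}$, and by \L o\'s this is a nonstandard integer; but there is no reason your a priori $n_i$ should satisfy $n_i \leq n(\alpha,i)$ on a $\de$-large set. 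So even for a single $\alpha$ the set $E_\alpha^\star$ need not be in $\de$, and your remark that one can ``diagonalize for a single $\alpha$'' only works if you are allowed to shrink $n_i$ --- which then has to be done for all $\lambda$ many $\alpha$ simultaneously. The appeal to internal separation is too vague to do this; there is no evident pair of disjoint subsets of $N$ whose separation yields the refinements you need.

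What actually closes the gap, and what the paper does, is to reverse the order of operations. First compute $n(\alpha,t)$ and $s(\alpha,t) := G(n(\alpha,t))$ for each $\alpha$; each $s_\alpha = \prod_t s(\alpha,t)/\de$ is nonstandard. Now use the consequence of ``good for the random graph'' that $\lcf(\omega,\de) \geq \lambda^+$: this gives a single nonstandard $\bz$ with $\bz < s_\alpha$ for all $\alpha < \lambda$. Only \emph{then} apply flexibility, below $\bz$, and set $d(\{\alpha\}) = X_\alpha \cap \{ t : \bz[t] < s(\alpha,t) \}$. On each index $t$ you now have at most $\bz[t]$ instances, each with agreement up to a level $n(\alpha,t)$ satisfying $G(n(\alpha,t)) > \bz[t]$, so $\oplus$ applies. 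The point is that goodness for the random graph enters through $\lcf$, not through internal separation.
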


\begin{proof}  
Let $M \models T$ and let $N = M^I/\de$. Consider a positive $\vp$-type $p(x)$, where 
$\vp  = \vp(\bar{x}, \bar{y})$. 
Enumerate the type as 
\[ \langle \vp_\alpha(\bar{x},\bar{a}_\alpha) : \alpha < \lambda \rangle. \]
Fix $i_* = \langle i_t : t \in I \rangle/\de$ a nonstandard integer 
(so that ``$\max$'' will be well defined). 
For a finite tuple $\bar{a}$ from $N$, 
\[ \mbox{let $f_{\bar{a}}$ mean  
$f_{\operatorname{tp}(\bar{a},\emptyset, N)}$} \] 
and given in addition an index $t \in I$, 
\[ \mbox{let $f_{\bar{a}[t]}$ mean $f_{\operatorname{tp}(\bar{a}[t],\emptyset, M)}$. } \]
For each $\alpha < \lambda$ and each $t \in I$ (i.e., for each formula and each index), define:
\begin{itemize}
\item $n(\alpha, t)$ to be the largest $n \leq i_t$ such that for all $\ell < m$, the type of 
$\bar{a}_{\alpha}[t]$ aligns with that of $\bar{a}_{\alpha}$ up to level $n$ as measured by $f$, that is,
\[ n(\alpha,t) := \max \{ n \leq i_t :  f_{\bar{a}_{\alpha}[t]} \rstr n = f_{\bar{a}_{\alpha}} \rstr n \}. \]
\item $s(\alpha, t) := G(n(\alpha, t))$, using the notation of $\ref{r:1167}$. 
\end{itemize}
The first is well defined since it the condition is trivially true for $0$. By \lost theorem, since the $f$'s reflect formulas 
for each $n<\omega$ and 
each $\alpha < \lambda$, 
\[ \{ t \in I : n < n(\alpha, t) \} \in \de. \]
Hence for each $\alpha < \lambda$, $n_\alpha := \prod_t n(\alpha,t) /\de$ is a nonstandard integer. 
It follows from \ref{r:1167}(b)($\star$) that for each $\alpha < \lambda$, $s_\alpha := \prod_t s(\alpha,t) /\de$ is either 
``$\infty$'' on a large set, or a nonstandard integer. 

Since $\de$ is good for the random graph, $\lcf(\omega, \de) \geq \lambda^+$, so there is a nonstandard integer 
$\bz = \langle \bz[t] : t \in I \rangle/\de$ such that 
for each $\alpha < \lambda$, $\bz < s_\alpha \mod \de$. 
Since $\de$ is flexible and $\bz$ is a nonstandard integer, 
we may choose $\{ X_\alpha : \alpha < \lambda \} \subseteq \de$ regularizing $\de$ and 
with the property that for each $t \in I$, 
\[ | \{ \alpha < \lambda :  t \in X_\alpha \} | \leq \bz[t]. \]
Define a map $d: [\lambda]^1 \rightarrow \de$ by:
\[ \{ \alpha \} \mapsto \{ t \in I : \bz[t] < s(\alpha,t) \} \cap X_\alpha. \]
That is, we assign $\alpha$ to an index set where we can be sure that the type of each 
$\bar{a}_{\alpha}[t]$ is ``correct'' up to the level needed to handle $s(\alpha,t) = G(n(\alpha,t))$ instances, thus a fortiori $\bz[t]$ instances. 
The intersection with $X_\alpha$ ensures, for each $t \in I$, the set $ U(t) :=  \{ \alpha : t \in d(~ \{ \alpha \} ~) \}$ of instances 
assigned to index $t$ has size $\leq \bz[t]$. 

Now for each $t \in I$, in the ultrapower $N$, 
\[ \{ \vp(\bar{x}, \bar{a}_\alpha) : \alpha \in U_t \} \]
is a set of no more than $\bz[t]$ positive instances of $\vp$, and by definition is a partial type. 
Also by our definition, for each $\alpha$, and in particular for each $\alpha \in U_t$, 
\[ f_{\bar{a}_\alpha} \rstr n(\alpha, t) =  f_{\bar{a}_\alpha[t]} \rstr n(\alpha, t). \]
It follows that
\[ \{ \vp(\bar{x}, \bar{a}_\alpha[t]) : \alpha \in U_t \} \]
remains a partial type in the index model $M$. So we can realize the type at each index under this distribution, 
and thus in the ultrapower $N$. 
\end{proof}

\begin{cor}
If $T$ is a theory from $\S \ref{t:theories}$, $M \models T$, and $\de$ is a regular ultrafilter on $I$, $|I| = \lambda$ which is 
flexible and good for the random graph, then $M^I/\de$ is $\lambda^+$-saturated. 
\end{cor}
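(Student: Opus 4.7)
By Claim \ref{c:example2}, the pair $(T, \vp)$ with $\vp = R(x; y_0, \dots, y_{k-2})$ is $1167$-like, so Theorem \ref{t:flex} already gives $\lambda^+$-saturation of $N := M^I/\de$ for positive $\vp$-types. The plan is to upgrade this to full $\lambda^+$-saturation using the quantifier elimination established above and the fact that $\de$ is good for the random graph. Given a complete $1$-type $p(x) \in \ts_1(A)$ with $|A| \le \lambda$, quantifier elimination lets me decompose $p$ into: an equality part (trivially handled when $p$ contains $x = a$); a leaf specification $\{ Q_{\rho \rstr n}(x) : n < \omega \}$ for a unique $\rho \in \leaves(X_\mch)$; a set of positive $R$-demands $\{ R(x, \bar a_\alpha) : \alpha < \lambda \}$; a set of negative $R$-demands $\{ \neg R(x, \bar b_\beta) : \beta < \lambda \}$; and inequalities $x \ne a$ for $a \in A$. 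Consistency of $p$ already implies each $\leaf(\bar a_\alpha)$ is $E_\infty$-compatible with $\rho$.

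The main step is to re-run the distribution scheme in the proof of Theorem \ref{t:flex} with richer local demands. For each positive $\alpha$ and each $t \in I$ I would define $n(\alpha,t)$ and $s(\alpha,t) = G(n(\alpha,t))$ exactly as there, and define analogous quantities for the negative $\beta$'s. Goodness for the random graph gives $\lcf(\omega, \de) \ge \lambda^+$, furnishing a common nonstandard bound $\bz = \langle \bz[t] : t \in I \rangle / \de$ below every $s_\alpha$ and $s_\beta$. Flexibility then supplies regularizing families $\{X_\alpha\} \cup \{Y_\beta\} \subseteq \de$ whose union controls the per-index load: for each $t \in I$, the set of positive and negative instances assigned to $t$ has combined size at most $\bz[t]$. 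At index $t$ the local task becomes to find $c_t \in Q^M_{\rho \rstr m_t}$, for some $m_t$ tending to infinity on a set in $\de$, with prescribed $R$-edges and non-edges to boundedly many tuples and distinct from boundedly many $a[t]$. By the random-hypergraph description of $M$ inside each predicate $Q_\eta^M$ furnished by model completion, together with the $1167$-like bookkeeping that guarantees the positive demands remain consistent after the substitution $\bar a_\alpha \mapsto \bar a_\alpha[t]$, such a $c_t$ exists in $M$. \L o\'s' theorem then produces $c = \langle c_t : t \in I \rangle/\de$ realizing $p$ in $N$.

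The main obstacle is carrying all three sorts of constraints through the same distribution: the positive-edge part, which needs the finite-level template bookkeeping of Theorem \ref{t:flex}; the negative-edge part, which behaves like the random graph inside each predicate and is handled by goodness for the random graph (used both to produce $\bz$ via $\lcf(\omega, \de) \ge \lambda^+$ and to realize the local demand at each index); and the leaf specification, which is essentially free from the regularity of $\de$. Arranging the regularizing families so that positive, negative, and leaf constraints cooperate at every index is the delicate point; once this is done, the local realization in $M$ is immediate from model completion and quantifier elimination.
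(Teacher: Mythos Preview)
Your overall plan is reasonable, but it diverges from the paper in a way that creates extra work and leaves one genuine soft spot unresolved.

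\textbf{What the paper does.} The paper does not re-run the distribution of Theorem~\ref{t:flex} with richer local demands. Instead it first argues (citing \cite{MiSh:1167} 4.7, 4.8, 5.2, 5.7) that in any regular ultrapower good for the random graph it suffices, for $\lambda^+$-saturation of these theories, to realize partial types of the shape
\[
p(x) = \{ Q_\nu(x) \} \cup \{ R(x,\bar a) : \bar a \in {}^{k-1}A \}
\]
with $\lgn(\nu)$ finite. The reductions are: saturation of regular ultrapowers reduces to $\vp$-types; quantifier elimination yields the normal form; equality and the full leaf of $x$ are harmless by weak saturation of regular ultrapowers; and the \emph{negative} $R$-instances are eliminated in one stroke by using goodness for the random graph to internally separate the positive and negative parameter sets by a single internal predicate. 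After this reduction, Theorem~\ref{t:flex} together with Claim~\ref{c:example2} applies directly and the proof is finished.

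\textbf{Where your approach is weaker.} You propose to carry positive and negative $R$-demands through the same per-index distribution and then realize locally in $M$. The problem is that at a fixed index $t$ nothing in your bookkeeping prevents a collision $\bar a_\alpha[t] = \bar b_\beta[t]$ (up to permutation) between a positive and a negative tuple, even though $\bar a_\alpha \neq \bar b_\beta$ in $N$. If that happens the local demand at $t$ is inconsistent. You gesture at ``goodness for the random graph'' to handle negatives, but the correct use of that hypothesis is \emph{global}: obtain one internal predicate $P$ with $\{\bar a_\alpha\} \subseteq P$ and $\{\bar b_\beta\} \cap P = \emptyset$, and then drop the negatives entirely (any realization of the positive type inside $P$ automatically omits the negatives). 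That is exactly the paper's move, and it should be done \emph{before} the distribution, not ``at each index.'' Once you make this correction, your argument collapses to the paper's: reduce to positive $R$-types first, then invoke Theorem~\ref{t:flex} as a black box rather than reproving it.
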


\begin{proof}
We argue almost identically to \cite{MiSh:1167} Definition 4.7, Claim 4.8, Fact 5.2 and Conclusion 5.7 (changing just the 
arity of the edge relation, and eliminating the bi-partition from the case of graphs), that in 
regular ultrapowers which are good for the theory of the random graph, for $\lambda^+$-saturation it suffices to consider partial types of the form 
\[ p(x) = \{ Q_\nu(x) \} \cup \{ R(x,\bar{a}) : \bar{a} \in {^{k-2}A} \} \]
for $\lgn(\nu) < \omega$.  [Briefly, those definitions and claims note that any regular ultrapower has a certain weak saturation, 
for instance leaves are large, and instances of equality in types can be safely ignored. Now use quantifier elimination to get a simple normal form for types by specifying the leaf of 
$x$, a set of tuples it connects to, and a disjoint set of tuples it does not connect to.   Since saturation of ultrapowers 
reduces to saturation of $\vp$-types, it is sufficient to deal with only a finite amount of information on the leaf of $x$. 
Finally, since ``goodness for the random graph'' allows us to internally separate sets of size $\leq \lambda$, it suffices to handle the 
positive part of the type.] 
\end{proof}

\begin{defn}
For the purposes of the next corollaries, call a theory $T$ a pseudo-nfcp theory if there is a set $\Sigma$ of formulas of the language such 
$(a)$ $(T,\vp)$ has the pseudo-nfcp for each $\vp \in \Sigma$, and $(b)$ given any regular ultrafilter 
$\de$ over $\lambda$ and $M \models T$, whether $M^\lambda/\de$ is $\lambda^+$-saturated depends only on 
$\lambda^+$-saturation for positive $\vp$-types for $\vp \in \Sigma$. 
\end{defn}

\begin{cor} \label{above-low}
Let $T$ be a pseudo-nfcp theory, and let $\tlf$ denote Keisler's order.  
\begin{enumerate}
\item[(a)] Let $T_*$ be any non-low simple theory. Then $T \tlf T_*$.
\item[(b)] $T \tlf \tfeq$.
\end{enumerate} 
Thus if $T$ is a pseudo-nfcp theory and $T_*$ is any non-low or non-simple theory, $T \tlf T_*$. 
In particular, this is true for all the theories of $\S \ref{t:theories}$ above. 
\end{cor}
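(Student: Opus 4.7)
The plan is to reduce both parts to Theorem \ref{t:flex} and then quote standard facts from the ultrafilter analysis of Keisler's order, performing essentially no new combinatorial work.

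First I would observe that, by the definition of ``$1167$-like'' given immediately above the statement, to prove $T \tlf T_*$ it suffices to show that every regular ultrafilter $\de$ on $\lambda$ which $\lambda^+$-saturates $T_*$ is \emph{both} flexible \emph{and} good for the random graph: for then Theorem \ref{t:flex} produces $\lambda^+$-saturation of positive $\vp$-types for each $\vp \in \Sigma$, and condition $(b)$ of the definition of ``$1167$-like'' promotes this to full $\lambda^+$-saturation of $M^\lambda/\de$, $M \models T$. So the entire argument reduces to identifying these two properties as necessary conditions for regularly saturating $T_*$.

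For part $(a)$, with $T_*$ simple and non-low, I would invoke: $(i)$ the fact that $\trg$ is Keisler-minimum among unstable theories, so any regular $\de$ that saturates $T_*$ also saturates $\trg$, which is precisely goodness for the random graph; and $(ii)$ the fact that any regular ultrafilter which saturates a non-low theory is flexible. Since stable theories are low, a simple non-low theory is automatically unstable, so fact $(i)$ is available. Together with the opening reduction these give $T \tlf T_*$. For part $(b)$, the theory $\tfeq$ is itself simple, unstable, and non-low, so the very same two facts yield that every regular $\de$ saturating $\tfeq$ is flexible and good for the random graph, hence $T \tlf \tfeq$. Finally, for the concluding ``thus'' the only remaining case is $T_*$ non-simple, and here I would cite the Malliaris--Shelah theorem placing $\tfeq$ Keisler-below every non-simple theory; together with $(b)$ and transitivity of $\tlf$ this yields $T \tlf \tfeq \tlf T_*$.

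The main obstacle is purely bookkeeping: aligning the three imported results---minimality of $\trg$ among unstable theories, the flexibility characterization of non-lowness, and the position of $\tfeq$ below all non-simple theories---with the precise formulations of ``flexible'' and ``good for the random graph'' used in Section \ref{s:flexible}. The genuine model-theoretic content, namely that flexibility together with goodness for the random graph is enough to saturate $T$, has already been packaged into Theorem \ref{t:flex} and the definition of ``$1167$-like,'' so no further analysis of types in $T$ is needed here.
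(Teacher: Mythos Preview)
Your overall architecture matches the paper's exactly: reduce to Theorem \ref{t:flex} by showing that any regular ultrafilter saturating $T_*$ must be both flexible and good for the random graph, then invoke minimality of the random graph among unstable theories for the second property and a flexibility-from-non-lowness fact for the first, and finally handle the non-simple case via the position of $\tfeq$.

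There is, however, a factual slip in your treatment of part~(b). You write that ``$\tfeq$ is itself simple, unstable, and non-low,'' and then recycle the non-low-simple argument from part~(a). But $\tfeq$ is \emph{not} simple: it has $TP_2$ (indeed, as the paper notes, it is the Keisler-minimum theory with $TP_2$). So the cited fact that ``saturating a non-low \emph{simple} theory implies flexibility'' does not apply to it, and your derivation of~(b) has a gap. The paper instead invokes a separate result, \cite[Lemma~8.8]{mm4}, which says directly that any regular ultrafilter good for $\tfeq$ is flexible; this is the correct citation for~(b). Once you replace your incorrect claim with that lemma, the rest of your argument goes through verbatim and agrees with the paper's proof.
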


\begin{proof}
Any regular ultrafilter on $\lambda \geq \aleph_0$ which is good for some unstable theory is necessarily good for the random graph, 
as the random graph is the $\tlf$-minimum unstable theory. 
Any regular ultrafilter which is good for $\tfeq$ is flexible \cite[Lemma 8.8]{mm4}, and indeed and regular ultrafilter 
$\de$ which is good for some non-low simple theory is flexible 
\cite[Lemma 8.7]{mm4}.  The last line of the Corollary now follows from the fact that $\tfeq$ is the Keisler-minimum non-simple theory  
\cite[Theorem 13.1]{MiSh:998} 
(as $\tfeq$ is minimum among theories with $TP_2$, whereas $SOP_2$ implies maximality). 
\end{proof}

\begin{disc}
\emph{
\tcb{The current instances of incomparability in Keisler's order mostly use one of two main ideas. The first is to say on one hand,  
changing the distance in the alephs between $\lambda$ and some smaller $\mu$ (the size of a maximal antichain in a certain Boolean algebra used in building the ultrafilter) affects for which values of $k$ the theories $T_{k+1, k}$ are saturated, and on the other,  the ``canonical
simple non-low theory'' (see appendix to \cite{MiSh:1124}) requires the ultrafilter to be flexible; under large cardinal assumptions, these 
two indicators can be varied independently, see \cite{ulrich}, \cite{MiSh:1124}. 
In ZFC, this phenomenon can be scaled down to see an incomparability between the $T_{k+1, k}$'s and a certain theory based on trees, 
which is low \cite{MiSh:1140}.   A second, much larger scale of incomparability was produced in \cite{MiSh:1167}, with continuum many 
simple rank one theories, the graph precursors of the hypergraph theories built here. As this discussion suggests, and as the proofs of 
this section show, once the ultrafilter becomes flexible, the noise of any differences in the present theories 
is drowned out by the huge power of the 
regularizing families available. Do there exist incomparable simple non low theories? Is incomparability mainly visible in the absence of 
forking? }
}
\end{disc} 

We also record that, as an interesting immediate consequence of earlier arguments 
\cite{ulrich}, \cite{MiSh:1124}, the theories built in $\S 2$ are 
(assuming a large cardinal) distinguishable in Keisler's order from the theories $T_{k+1,k}$, the higher analogues of the 
triangle-free random graph from \cite{h:letter}. That is:

\begin{concl}
Assuming a supercompact cardinal, for arbitrarily large $\lambda$ and any $\ell <\omega$ there is a regular ultrafilter $\de$ on $\lambda$ 
which is flexible and good for the random graph, \emph{thus} good for theories of $\S 2$, but not good for 
$T_{k+1,k}$ for any $2 \leq k < \ell$. 
\end{concl}

\begin{proof} 
\cite{MiSh:1124} Claim 10.32 gives the existence of the needed ultrafilter and in clause (a) shows it is not good for $T_{k+1,k}$
 for $k < \ell$.  \cite{MiSh:1124} Claim 10.30 shows this ultrafilter is flexible and good for the random graph. 
 So by Theorem \ref{t:flex} above it can handle the theories of \S 2. 
\end{proof}

\end{document}